\newtheorem{theorem}{Theorem}
\newtheorem{lemma}[theorem]{Lemma}
\newenvironment{proof}{\noindent{\bf Proof.}}{\hspace*{2mm}~$\square$}
\newenvironment{proofof}[1]{\noindent{\bf Proof of #1.}}{\hspace*{2mm}~$\square$}
\newcommand{\N}{\mathbb{N}}
\newcommand{\Z}{\mathbb{Z}}
\newcommand{\R}{\mathbb{R}}
\newcommand{\F}{\mathscr F}
\newcommand{\G}{\mathscr G}
\newcommand{\V}{\mathscr V}
\newcommand{\E}{\mathscr E}
\newcommand{\D}{\mathscr D}
\newcommand{\bphi}{\Phi}
\newcommand{\ind}{\mathbf{1}}
\newcommand{\ep}{\epsilon}
\newcommand{\n}{\hspace*{-5pt}}
\DeclareMathOperator{\card}{card}
\DeclareMathOperator{\poisson}{Poisson \,}
\begin{document}

\begin{frontmatter}
\title     {The role of cooperation in spatially explicit \\ economical systems}
\runtitle  {The role of cooperation in spatially explicit economical systems}
\author    {Nicolas Lanchier\thanks{Research supported in part by NSA Grant MPS-14-040958.} and Stephanie Reed}
\runauthor {N. Lanchier and S. Reed}
\address   {School of Mathematical and Statistical Sciences \\ Arizona State University \\ Tempe, AZ 85287, USA.}

\maketitle

\begin{abstract} \ \
 This paper is concerned with a model in econophysics, the subfield of statistical physics that applies concepts from traditional physics to economics. In our model, economical agents are represented by the vertices of a connected graph and are characterized by the number of coins they possess. Agents independently spend one coin at rate one for their basic need, earn one coin at a rate chosen independently from a fixed distribution~$\phi$ and exchange money at rate~$\mu$ with one of their nearest neighbors, with the richest neighbor giving one coin to the other neighbor. If an agent needs to spend one coin when her fortune is at zero, she dies, i.e., the corresponding vertex is removed from the graph. Our first results focus on the two extreme cases of lack of cooperation~$\mu = 0$ and perfect cooperation~$\mu = \infty$ for finite connected graphs. These results suggest that, when overall the agents earn more than they spend, cooperation is beneficial for the survival of the population, whereas when overall the agents earn less than they spend, cooperation becomes detrimental. The infinite one-dimensional system is also studied. In this case, when the agents earn less than they spend in average, the density of agents that die eventually is bounded from below by a positive constant that does not depend on the initial number of coins per agent or the level of cooperation.
\end{abstract}

\begin{keyword}[class=AMS]
\kwd[Primary ]{60K35, 91B72}
\end{keyword}

\begin{keyword}
\kwd{Interacting particle systems, martingales, gambler's ruin, econophysics, cooperation.}
\end{keyword}

\end{frontmatter}


\section{Introduction}

\indent Models in econophysics typically consist of large systems of economical agents who earn, spend and exchange money.
 For a review of such models, we refer the reader to~\cite{yakovenko_et_al_2009}.
 These models so far have mainly been studied by statistical physicists.
 From a mathematical point of view, they fall into the category of stochastic processes known as interacting particle systems~\cite{harris_1972, liggett_1985}.
 The most basic model in econophysics has been studied in~\cite{dragulescu_yakovenko_2000} based on numerical simulations but was
 also considered earlier in~\cite{bennati_1988, bennati_1993}.
 This model consists of a system of~$n$ interacting economical agents that are characterized by the number of dollars they possess, and evolves as follows:
 at each time step, an agent chosen uniformly at random gives one dollar to another agent again chosen uniformly at random, unless the first agent has no
 money in which case nothing happens.
 The main conjecture about this model is that, when the number of agents and the money temperature, defined as the average amount of money per agent, are
 both large, the limiting distribution of money is well approximated by the exponential distribution with parameter the money temperature. \\
\indent Spatially explicit versions of this model where agents are located on the vertices of a finite connected graph and can only exchange money with
 their nearest neighbors have been recently introduced and studied analytically in~\cite{lanchier_2017}.
 The non-spatial model considered in~\cite{dragulescu_yakovenko_2000} is simply obtained by assuming that the underlying graph is the complete graph
 with~$n$ vertices.
 It is proved in~\cite{lanchier_2017} that the conjecture in~\cite{dragulescu_yakovenko_2000} is indeed correct and in fact holds for all spatially
 explicit versions, not only the process on the complete graph. \\
\indent In this paper, we study variants of the spatially explicit models~\cite{lanchier_2017} where agents also earn money, spend money
 and die if they run out of money.
 In addition, we assume that the exchange of money occurs in a cooperative setting, meaning that the flow of money is exclusively directed from ``rich''
 agents to ``poor'' agents.
 We also follow the framework of interacting particle systems~\cite{liggett_1985} by assuming that the process evolves in continuous rather
 than in discrete time.
 This approach will allow us to define the system on infinite graphs using an idea of Harris~\cite{harris_1972} that consists in constructing the
 process from a collection of independent Poisson processes. \vspace*{8pt}


\noindent {\bf Model description.}
 To define our spatial model formally, we let~$\G = (\V, \E)$ be a finite or infinite locally finite connected graph.
 Each vertex represents an economical agent who is either alive and characterized by the amount of money she possesses, or dead.
 To fix the ideas, we assume that the amount of money agents who are alive possess is a nonnegative integer representing a number of credits or coins,
 while we use the state~$-1$ for dead agents.
 In particular, the state of the system at time~$t$ is a spatial configuration
 $$ \xi_t : \V \longrightarrow \{-1, 0, 1, 2, \ldots \} $$
 with the value of~$\xi_t (x)$ indicating that agent~$x$ is dead or representing the number of coins this agent possesses when she is alive.
 To define the evolution rules, we attach to each vertex~$x \in \V$ a random variable~$\phi_x$ chosen independently from a fixed distribution~$\phi$.
 The individual at vertex~$x$ earns one coin at rate~$\phi_x$ and, to ensure her survival, spends one coin at rate one.
 The population is also characterized by its level of cooperation which is measured using a nonnegative parameter~$\mu$ as follows:
 nearest neighbors that are alive interact at rate~$\mu$ and, in case one neighbor has at least two more coins than the other neighbor, she gives one
 coin to the other neighbor.
 In particular, the ``richest'' agent before the interaction does not give any coin if this makes her ``poorer'' than her neighbor.
 Finally, if an individual has zero coin at the time she needs to spend one coin then she dies and the corresponding vertex is removed from the graph.
 To describe the dynamics formally, for each spatial configuration~$\xi$, we let
 $$ \begin{array}{rlcl}
    \hbox{\bf spending} & \xi_x^- (z) = \xi (z) - \ind \{z = x \} & \hbox{for all} & z \in \V \vspace*{4pt} \\
     \hbox{\bf earning} & \xi_x^+ (z) = \xi (z) + \ind \{z = x \} & \hbox{for all} & z \in \V \end{array} $$
 be the configurations obtained respectively by removing/adding one coin at vertex~$x$.
 Also, for each edge~$(x, y) \in \E$ of the network of interactions, we let
 $$ \begin{array}{rl}
    \hbox{\bf cooperation} & \xi_{(x, y)} (z) = \xi (z) + \ind \{\xi (x) < \xi (y) - 1 \} (\ind \{z = x \} - \ind \{z = y \}) \vspace*{4pt} \\ & \hspace*{90pt}
                                              + \ \ind \{\xi (y) < \xi (x) - 1 \} (\ind \{z = y \} - \ind \{z = x \}) \end{array} $$
 be the configuration obtained by moving one coin from the richer to the poorer vertex if the two vertices are at least two coins apart.
 The dynamics of the system is then described by the Markov generator~$L$ defined on the set of cylinder functions by
 $$ \begin{array}{rcl}
      Lf (\xi) & \n = \n & \sum_{x \in \V} (f (\xi_x^-) - f (\xi)) \,\ind \{\xi (x) \neq -1 \} \vspace*{6pt} \\ &&
                 \hspace*{20pt} + \ \sum_{x \in \V} \phi_x \,(f (\xi_x^+) - f (\xi)) \,\ind \{\xi (x) \neq -1 \} \vspace*{6pt} \\ &&
                 \hspace*{40pt} + \ \sum_{(x, y) \in \E} \mu \,(f (\xi_{(x, y )}) - f (\xi)) \,\ind \{\xi (x) \neq -1, \xi (y) \neq -1 \}. \end{array} $$
 The first sum describes the rate at which vertices spend one coin, the second sum the rate at which they earn one coin, and the third sum the rate
 at which neighbors exchange one coin.
 As previously mentioned, the process is well defined on locally finite graphs, including infinite graphs, and can be constructed from a collection of independent
 Poisson processes.
 More precisely,
\begin{itemize}
 \item for all~$x \in \V$, let~$N_t^- (x)$ be a Poisson process with intensity one, \vspace*{4pt}
 \item for all~$x \in \V$, let~$N_t^+ (x)$ be a Poisson process with intensity~$\phi_x$, \vspace*{4pt}
 \item for all~$(x, y) \in \E$, let~$N_t (x, y)$ be a Poisson process with intensity~$\mu$.
\end{itemize}
 We further assume that these processes are independent.
 This implies that, with probability one, the arrival times are all distinct.
 A general result due to Harris~\cite{harris_1972} then shows that the process can be constructed using the following rules:
\begin{itemize}
 \item At the arrival times of the Poisson process~$N_t^- (x)$, we take one coin from the individual at vertex~$x$ if this individual is still alive. \vspace*{4pt}
 \item At the arrival times of the Poisson process~$N_t^+ (x)$, we give one coin to the individual at vertex~$x$ if this individual is still alive. \vspace*{4pt}
 \item At the arrival times of~$N_t^+ (x, y)$, we move one coin from~$x$ to~$y$ if~$x$ has at least two more coins than~$y$ or one coin from~$y$ to~$x$ if~$y$ has at least two more coins than~$x$.
\end{itemize}
\vspace*{4pt}


\noindent {\bf Main results.}
 To begin with, we compare the two processes with the same earning rates~$\phi_z$ in the absence of cooperation~$\mu = 0$ and in the presence of perfect
 cooperation~$\mu = \infty$ on finite connected graphs to understand whether cooperation is beneficial or detrimental for survival.
 Our first results look at the probability of global survival that we define as
 $$ p_{\mu} (c, (\phi_z)) = P (\xi_t (z) \neq -1 \ \hbox{for all} \ (z, t) \in \V \times \R_+ \,| \,\xi_0 \equiv c) $$
 where~$c$ refers to the common initial number of coins per agent and where the earning rates~$\phi_z$ are independent realizations of the distribution~$\phi$ for all~$z \in \V$.
 Estimates for the probability of global survival can be expressed in terms of the two key quantities
\begin{equation}
\label{eq:2-keys}
  \begin{array}{l} \D = \max_{x \in \V} \sum_{z \in \V} d (x, z) \quad \hbox{and} \quad \bphi = (1/n) \sum_{z \in \V} \phi_z \end{array}
\end{equation}
 where~$d$ refers to the graph distance and~$n$ to the population size.
 Using that, as long as all the agents are alive, the total number of coins on the graph behaves like a random walk that increases at rate~$n \bphi$
 and decreases at rate~$n$ together with the fact that nearest neighbors are at most one coin apart in the presence of perfect cooperation, we get the
 following theorem.
\begin{theorem} --
\label{th:n}
 In the presence of perfect cooperation~$\mu = \infty$,
 $$ p_{\infty} (c, (\phi_z)) \geq \max \,(0, 1 - \bphi^{- (nc - \D + 1)}). $$
\end{theorem}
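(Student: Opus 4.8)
The plan is to reduce the survival question to a one-dimensional gambler's ruin estimate for the total wealth, using the rigidity forced by perfect cooperation to control the poorest agent. First I would, as long as every agent is still alive, track the total number of coins $S_t = \sum_{z \in \V} \xi_t(z)$. Each cooperation event conserves $S_t$ (a coin is merely moved), each of the $n$ spending clocks lowers it by one at total rate $n$, and the earning clocks raise it by one at total rate $\sum_{z} \phi_z = n \bphi$. Hence, up to the first death, $S_t$ is exactly a continuous-time nearest-neighbour random walk on $\Z$ started at $S_0 = nc$, with up-rate $n\bphi$ and down-rate $n$; equivalently its embedded jump chain steps up with probability $\bphi/(1+\bphi)$ and down with probability $1/(1+\bphi)$.

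The second ingredient is the geometric rigidity of the $\mu = \infty$ dynamics: perfect cooperation keeps any two adjacent living agents within one coin of each other, so $|\xi_t(x) - \xi_t(z)| \leq d(x,z)$ for every pair $x, z$. Writing $x^{\ast}$ for a vertex achieving $m_t = \min_{z} \xi_t(z)$ and summing $\xi_t(z) \leq m_t + d(x^{\ast}, z)$ over $z$ gives $S_t \leq n m_t + \sum_{z} d(x^{\ast}, z) \leq n m_t + \D$, that is, $m_t \geq (S_t - \D)/n$. In particular, as long as $S_t$ stays strictly above $\D$ the integer-valued minimum satisfies $m_t \geq 1$, so no living agent is ever at zero when a spending clock rings and nobody can die. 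Consequently the survival event contains the event that the walk $S_t$ never descends to the death threshold near $\D$, so that $p_{\infty}(c,(\phi_z))$ is bounded below by the probability of that ruin-avoidance event, and it only remains to estimate the latter.

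For the estimate I would exploit that $k \mapsto \bphi^{-k}$ is harmonic for the walk, since $n\bphi\,\bphi^{-(k+1)} + n\,\bphi^{-(k-1)} = n(1+\bphi)\bphi^{-k}$; thus $\bphi^{-S_t}$ is a martingale up to the first death. Applying optional stopping between the threshold and $+\infty$, and using that for $\bphi > 1$ the walk reaches $+\infty$ with positive probability while $\bphi^{-S_t} \to 0$ there, the probability that $S_t$ ever descends to the threshold equals $\bphi$ raised to the negative of the initial gap to that threshold; locating the threshold so that descent first permits a zero-valued living agent makes this gap $nc - \D + 1$, and hence this probability $\bphi^{-(nc - \D + 1)}$. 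When $\bphi \leq 1$ the walk is recurrent or has downward drift and reaches any lower level almost surely, which is exactly why the bound then collapses, and taking the complement and intersecting with the trivial lower bound $0$ yields $p_{\infty}(c,(\phi_z)) \geq \max(0, 1 - \bphi^{-(nc - \D + 1)})$.

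The hard part will be making the two heuristic inputs rigorous. First, I must give a clean meaning to the $\mu = \infty$ limit and prove as a lemma that it genuinely maintains the within-one-coin property at all times: each earn or spend momentarily creates a gap of two that instantaneous cooperation has to resolve, and one must check this relaxation is well defined and does not cascade pathologically, so that the distance bound $m_t \geq (S_t - \D)/n$ holds throughout. Second, the optional-stopping argument lives on an unbounded half-line, which forces the dichotomy $\bphi > 1$ versus $\bphi \leq 1$ and requires controlling the escape to $+\infty$; and pinning the exact level at which a descent of $S_t$ first enables a zero-agent — the point responsible for the precise exponent $nc - \D + 1$ rather than $nc - \D$, since the killing event is itself a downward spending step at the poorest vertex — is the one delicate constant I would have to track with care.
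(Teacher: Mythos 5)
Your proposal is correct and follows essentially the same route as the paper: the total fortune is coupled with the birth--death walk $(W_t)$ of rates $n\bphi$ and $n$, the harmonic function $\bphi^{-k}$ gives the martingale and the ruin probability via optional stopping with a monotone limit $N \to \infty$, and the perfect-cooperation gradient bound $|\xi_t(x)-\xi_t(z)| \leq d(x,z)$ yields the threshold $\D$ (the paper states this bound at the death time as $Z_{\tau-}\leq\D$, which is just the contrapositive of your $m_t \geq (S_t-\D)/n$). You also correctly isolate the one delicate point, namely that the fatal spending event itself is a down-step of the coupled walk, so that death forces the walk to reach level $\D-1$ and the exponent is $nc-\D+1$ rather than $nc-\D$.
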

 The proof relies, among other things, on an application of the optional stopping theorem for martingales.
 The inequality in the statement turns out to be an equality when~$n = 1$.
 In particular, since the system in the absence of cooperation behaves like~$n$ independent copies of a one-person system, the theorem
 also gives the probability of global survival when~$\mu = 0$.
 Using this and some basic algebra, it can be proved that, when~$\bphi > 1$ and~$c$ is large, the probability of global survival is larger in the presence
 of perfect cooperation than in the absence of cooperation.
\begin{theorem} --
\label{th:n-compare}
 Assume that~$\bphi > 1$. Then, there exists~$c_0 < \infty$ such that,
 $$ \begin{array}{rcl}
      p_0 (c, (\phi_z)) & \n = \n & \prod_{z \in \V} \,\max \,(0, 1 - \phi_z^{- (c + 1)}) \vspace*{4pt} \\
                        & \n \leq \n & \max \,(0, 1 - \bphi^{- (nc - \D + 1)}) \leq p_{\infty} (c, (\phi_z)) \quad \hbox{for all} \quad c \geq c_0. \end{array} $$
\end{theorem}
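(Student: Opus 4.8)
The plan is to treat the three relations in the display separately, since the rightmost inequality is exactly the content of Theorem~\ref{th:n} and requires no further work. I would begin by recording the leftmost equality. When $\mu = 0$ the cooperation term in the generator $L$ vanishes, so the $n$ coordinates of $(\xi_t(z))_{z \in \V}$ evolve as $n$ independent one-person systems, each driven by its own pair of Poisson clocks $N_t^-(z)$ and $N_t^+(z)$. Global survival is the intersection of the $n$ independent events that agent $z$ never hits the dead state, so $p_0(c,(\phi_z))$ factorizes as the product of the single-agent survival probabilities. Each such probability is given by the $n=1$ case of Theorem~\ref{th:n}, which is an equality and for which $\D = 0$; this yields the factor $\max(0, 1 - \phi_z^{-(c+1)})$ and hence the claimed product formula.

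The substance of the theorem is the middle inequality, and here I would first dispose of the degenerate case. If some $\phi_z \le 1$, the corresponding factor $\max(0, 1 - \phi_z^{-(c+1)})$ is zero, so the product $p_0(c,(\phi_z))$ vanishes and the inequality holds trivially because the right-hand side is a maximum with $0$. Thus I may assume $\phi_z > 1$ for every $z$, in which case each factor equals $1 - \phi_z^{-(c+1)} \in (0,1)$. Writing $\phi_{\min} = \min_{z \in \V} \phi_z > 1$ and retaining only the smallest of the factors while bounding each of the remaining factors by $1$, I obtain
$$ \prod_{z \in \V} (1 - \phi_z^{-(c+1)}) \ \le\ 1 - \phi_{\min}^{-(c+1)}. $$
For $c$ large enough that $nc - \D + 1 > 0$ the right-hand side of the theorem equals $1 - \bphi^{-(nc-\D+1)}$, so it suffices to prove $\bphi^{-(nc-\D+1)} \le \phi_{\min}^{-(c+1)}$, that is, after taking logarithms,
$$ (c+1)\log \phi_{\min} \ \le\ (nc - \D + 1)\log \bphi. $$

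The decisive point is this last comparison, which I would settle by viewing the two sides as affine functions of $c$. The coefficient of $c$ on the left is $\log\phi_{\min}$ and on the right is $n\log\bphi$. Since the mean dominates the minimum, $\bphi \ge \phi_{\min}$, and since $\bphi > 1$ and $n \ge 2$, one has $n\log\bphi > \log\bphi \ge \log\phi_{\min}$, so the right-hand slope strictly exceeds the left-hand one. Consequently the difference $(nc-\D+1)\log\bphi - (c+1)\log\phi_{\min}$ is affine in $c$ with positive slope, hence nonnegative for all $c$ beyond some finite threshold $c_0$; enlarging $c_0$ if necessary to also guarantee $nc_0 - \D + 1 > 0$ completes the argument. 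The borderline case $n = 1$ is not really an obstacle: there $\D = 0$ and $\phi_{\min} = \bphi$, so both outer expressions reduce to $\max(0, 1 - \bphi^{-(c+1)})$ and the three relations hold with equality for every $c$. The main thing to get right is therefore the strictness $n\log\bphi > \log\phi_{\min}$, which is precisely what forces the gap between the two survival probabilities to open up as $c$ grows.
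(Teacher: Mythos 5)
Your proof is correct and follows essentially the same route as the paper: the same independence factorization together with the $n=1$ case of Theorem~\ref{th:n} for the equality, the same trivial disposal of the case where some $\phi_z \le 1$, and the same decisive comparison resting on $\bphi \ge \min_{z}\phi_z$ and the exponent $nc-\D+1$ outpacing $c+1$ when $n\ge 2$. The only difference is cosmetic: you bound the product by its smallest factor and compare exponents directly as affine functions of $c$, whereas the paper takes logarithms of the product and uses $\log(1-u)\le -u$ and $\log(1-u)\ge -u/(1-u)$, but both arguments reduce to the same slope comparison.
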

 More generally, we conjecture that, when~$\bphi > 1$, i.e., when overall the agents earn more than they spend, the probability of global survival is larger
 in the presence of perfect cooperation than in the absence of cooperation regardless of the initial value~$c$. \\
\indent We now focus on the two-person system: we set~$\V = \{x, y \}$ and assume that vertices~$x$ and~$y$ are connected by an edge.
 In this case, Theorem~\ref{th:n} implies that when
 $$ \bphi = (1/2)(\phi_x + \phi_y) > 1 \quad \hbox{and} \quad \phi_x < 1 < \phi_y $$
 global survival is possible in the presence of perfect cooperation whereas individual~$x$ dies almost surely in the absence of cooperation, showing again that
 cooperation is beneficial.
 Cooperation, however, becomes detrimental when
 $$ \bphi = (1/2)(\phi_x + \phi_y) < 1 \quad \hbox{and} \quad \phi_x < 1 < \phi_y. $$
 In this case, regardless of the level of cooperation~$\mu$, global survival is not possible so, to measure the effect of cooperation, we study instead
 $$ E_{\mu} (c, (\phi_z)) = E (\card \{z \in \V : \xi_t (z) \neq -1 \ \hbox{for all} \ t \in \R_+ \,| \,\xi_0 \equiv c), $$
 the expected number of individuals that live forever.
 Due to perfect cooperation and the fact that individual~$x$ dies almost surely, it can be proved that the last time both individuals each have one coin
 is almost surely finite and that, between this time and the first time one of the two individuals dies, the process behaves according to a certain seven-state
 Markov chain.
 Using a first-step analysis to study this Markov chain and part of the proof of Theorem~\ref{th:n}, the expected value of the number of individuals that
 live forever can be computed explicitly.
\begin{theorem} --
\label{th:2-compare}
 Assume that~$\V = \E = \{x, y \}$ and that
 $$ \bphi = (1/2)(\phi_x + \phi_y) < 1 \quad \hbox{and} \quad \phi_x < 1 < \phi_y. $$
 Then, letting~$\Psi = 8 + 2 \phi_x + 2 \phi_y$, for all~$c \geq 1$,
 $$ \begin{array}{rcl}
      E_{\infty} (c, \phi_x, \phi_y) & \n = \n & \displaystyle \bigg(\frac{2}{\Psi} \bigg) \bigg(1 - \frac{1}{\phi_y} \bigg) + \bigg(\frac{\phi_y}{\Psi} + \frac{1}{4} \bigg) \bigg(1 - \bigg(\frac{1}{\phi_y} \bigg)^2 \bigg) \vspace*{8pt} \\
                                     & \n < \n & \displaystyle  1 - \bigg(\frac{1}{\phi_y} \bigg)^{c + 1} = E_0 (c, \phi_x, \phi_y). \end{array} $$
\end{theorem}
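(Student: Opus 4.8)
The plan is to reduce the computation of $E_\infty$ to a single survival probability and then to an absorption problem for a small Markov chain. First I would observe that, under perfect cooperation, the two coordinates always stay within one coin of each other, so the total wealth $S_t = \xi_t(x) + \xi_t(y)$ behaves, as long as both agents are alive, like a random walk that goes up at rate $\phi_x + \phi_y = 2\bphi < 2$ and down at rate $2$; in particular it has negative drift. Using this together with the gambler's ruin and optional-stopping estimates from the proof of Theorem~\ref{th:n}, I would argue that (i) agent $x$ cannot live forever (alone she has $\phi_x < 1$ and dies almost surely, and the two cannot both survive because the negative drift forces infinitely many returns to low wealth, each carrying a fixed positive probability of a death), and (ii) consequently the number of agents that live forever is $0$ or $1$ and equals $1$ exactly when $y$ lives forever, so that $E_\infty = P(y \text{ lives forever})$.

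Next I would localize the relevant dynamics near the absorbing region. Because cooperation keeps $|\xi_t(x) - \xi_t(y)| \le 1$, the only configuration with total wealth $2$ is $(1,1)$, whereas any death requires total wealth at most $1$; thus every death is preceded by a visit to $(1,1)$, and an excursion into $\{S_t \ge 3\}$ returns to $(1,1)$ almost surely by negative drift. The negative drift also guarantees that $(1,1)$ is reached from the initial state $(c,c)$ and, since each excursion from $(1,1)$ has a fixed positive chance of ending in a death, that the last visit to $(1,1)$ is almost surely finite. This is the step I expect to be the main obstacle: one must justify carefully both the almost-sure finiteness of the last visit to $(1,1)$ and the claim that, read off between this last visit and the first death, the process is a genuine time-homogeneous Markov chain on the seven states $(1,1)$, $(0,1)$, $(1,0)$, $(0,0)$ together with the three absorbing outcomes ``$x$ dead with $y$ at $1$'', ``$x$ dead with $y$ at $0$'', and ``$y$ dead'', the transition probabilities being those obtained from the earning and spending rates after deleting the now forbidden returns to $(1,1)$ and renormalizing.

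Granting this reduction, the computation is a routine first-step analysis. Writing the absorption probabilities from each transient state and solving the resulting linear system, I would obtain the probability $2/\Psi$ that the first death is ``$x$ dead with $y$ at $0$'' and the probability $\phi_y/\Psi + \frac{1}{4}$ that it is ``$x$ dead with $y$ at $1$'', where $\Psi = 8 + 2\phi_x + 2\phi_y$. After $x$ dies, $y$ evolves as an isolated one-person system with earning rate $\phi_y > 1$, so by Theorem~\ref{th:n} with $n = 1$ and $\D = 0$ (where the inequality is an equality) she then lives forever with probability $1 - \phi_y^{-(k+1)}$ when she holds $k$ coins at the moment of $x$'s death. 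Multiplying the two absorption probabilities by $1 - \phi_y^{-1}$ and $1 - \phi_y^{-2}$ respectively and adding gives the stated closed form for $E_\infty$.

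Finally, for the inequality $E_\infty < E_0(c,\phi_x,\phi_y) = 1 - \phi_y^{-(c+1)}$, I would note that $E_\infty$ does not depend on $c$ whereas $1 - \phi_y^{-(c+1)}$ is increasing in $c$, so it suffices to treat $c = 1$, i.e.\ to show $E_\infty < 1 - \phi_y^{-2}$. Setting $w = 1/\phi_y$, one factors $1 - w^2 - E_\infty = (1 - w)\big[(1+w)(\frac{3}{4} - \phi_y/\Psi) - 2/\Psi\big]$ and checks that the bracket is positive: since $\phi_x > 0$, $\phi_y > 1$ and $\bphi < 1$ give $\Psi > 10$ and $1 < \phi_y < 2$, one has $\frac{3}{4} - \phi_y/\Psi > 0$ and $1 + w > 1$, whence the bracket exceeds $\frac{3}{4} - (\phi_y + 2)/\Psi > 0$ (equivalently $16 + 6\phi_x + 2\phi_y > 0$). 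As $1 - w > 0$ this yields the strict inequality for every $c \ge 1$.
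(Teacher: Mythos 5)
Your route is the one the paper itself takes: reduce $E_\infty$ to $P(y \hbox{ survives})$, decompose the trajectory at the last visit $T_+$ to $(1,1)$, compute the four exit probabilities $p_1,\dots,p_4$ by a first-step analysis on a small chain, and then average the one-person survival probabilities $1-\phi_y^{-1}$ and $1-\phi_y^{-2}$. Your algebraic finish (reduce to $c=1$ by monotonicity in $c$ and factor out $1-1/\phi_y$) differs from, but works as well as, the paper's, which simply bounds both terms by $1-\phi_y^{-(c+1)}$ and uses $p_2+p_4\le 1$ together with $p_2>0$.

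The step you single out as the main obstacle is, however, a genuine gap, and not merely one of rigor: ``deleting the forbidden returns to $(1,1)$ and renormalizing'' is \emph{not} the law of the process between $T_+$ and the first death. That law is the chain conditioned never to return to $(1,1)$, i.e.\ the Doob $h$-transform with $h(s)=P_s(\hbox{a death occurs before $(1,1)$ is hit})$, whose jump probabilities from $s$ to $s'$ are proportional to $p(s,s')\,h(s')$. From $(1,0)$ the two admissible targets are the death state $(1,-1)$, where $h=1$, and $(0,0)$, where $h(0,0)<1$ because the unconditioned chain started at $(0,0)$ can still climb back to $(1,1)$; hence the conditioned chain jumps to $(1,-1)$ with probability $1/(1+h(0,0))>1/2$, not $1/2$. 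One can sidestep the conditioning entirely by doing the first-step analysis on the \emph{unconditioned} chain started at $(1,1)$, using that every excursion above total wealth $2$ returns to $(1,1)$ almost surely before any death can occur. This confirms $p_1=p_2=2/\Psi$ and $p_3+p_4=1/2+(\phi_x+\phi_y)/\Psi$, but gives
$$ p_4 \ = \ \frac{1}{4}+\frac{(\phi_x+\phi_y)^2+4\phi_y}{4\,(2+\phi_x+\phi_y)(4+\phi_x+\phi_y)}, $$
which agrees with $\phi_y/\Psi+1/4$ only when $\phi_x=\phi_y$ (for instance $\phi_x=0$, $\phi_y=1.8$ yields $p_3\approx 0.287$ rather than $1/4$). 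The qualitative conclusion survives, since $p_2>0$ and $p_2+p_4\le 1$ still force $E_\infty<1-\phi_y^{-(c+1)}$, but the closed form produced by truncate-and-renormalize --- which is also the one asserted in the paper's Lemma~\ref{lem:two-person} --- does not follow from the argument as you describe it.
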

 Our approach to prove this result works in theory for all complete graphs, but becomes computationally intractable even with only three vertices.
 More generally, we conjecture that, at least on the complete graph and when~$\bphi < 1$, i.e., when overall the agents earn less than they spend, the expected
 number of individuals that live forever is larger in the absence of cooperation than in the presence of perfect cooperation.
 In a nutshell, we conjecture that cooperation is beneficial for populations that are ``productive'' but detrimental for populations that are not. \\
\indent Finally, we look at the infinite system in one dimension: the underlying graph is represented by the integers with each integer being connected to its
 predecessor and to its successor.
 In this case, the process is more difficult to study because the graph is infinite.
 The next result shows that, when the expected value of~$\phi$ is less than one, the density of individuals who die eventually in the infinite one-dimensional system
 is bounded from below by a positive constant that does
 not depend on the initial number of coins per agent.
\begin{theorem} --
\label{th:1D-sink}
 Assume that~$E (\phi) < 1$. Then,
 $$ \lim_{n \to \infty} \ \frac{1}{2n + 1} \ \sum_{z = -n}^n \,\ind \{\xi_t (z) = -1 \ \hbox{for some} \ t \} = l $$
 where~$l > 0$ does not depend on the initial fortune~$c$ per vertex.
\end{theorem}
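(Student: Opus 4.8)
The plan is to reduce the spatial density to a single‑site probability by ergodicity, and then to bound that probability from below through a conservation‑of‑money argument in which the cooperation terms cancel exactly, so that the bound is free of both~$c$ and~$\mu$.

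First I would establish existence of the limit and identify~$l$. The graphical construction builds~$\xi_t$ from the i.i.d.\ family~$\{\phi_z,\,N_\cdot^-(z),\,N_\cdot^+(z),\,N_\cdot(x,y)\}$, which is a product measure invariant and ergodic under the shift~$z \mapsto z+1$. The event~$\{\xi_t(z) = -1 \text{ for some } t\}$ is a translation‑covariant measurable function of this family, so the field of indicators is stationary and ergodic; Birkhoff's theorem then gives that the Ces\`aro average over~$\{-n,\dots,n\}$ converges almost surely to~$l = P(\xi_t(0) = -1 \text{ for some } t)$, the probability that the origin dies eventually. It remains to prove~$l \geq 1 - E(\phi) > 0$ uniformly in~$c$ and~$\mu$.

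Next I would set up a money balance at the origin. Let~$m(t) = E[\xi_t(0)\,\ind \{\xi_t(0) \geq 0 \}]$ be the expected fortune at~$0$ (a dead agent counting~$0$), so~$m(0) = c$ and~$m(t) \geq 0$. Applying Dynkin's formula to the fortune at~$0$: earning adds~$+1$ at rate~$\phi_0$ while the agent is alive; spending subtracts~$1$ at rate~$1$ while she is alive with at least one coin (a spending ring on a zero‑coin alive agent is a death and leaves the fortune at~$0$); cooperation relocates single coins across the edges~$(-1,0)$ and~$(0,1)$. The cooperation contribution equals~$\mu \{P(\xi(0) < \xi(1)-1) - P(\xi(1) < \xi(0)-1) + P(\xi(0) < \xi(-1)-1) - P(\xi(-1) < \xi(0)-1)\}$, all with both endpoints alive, and by translation invariance of the law of~$\xi_t$ the first and fourth probabilities cancel, as do the second and third. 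Hence
$$ \frac{d}{dt}\,m(t) = E[\phi_0\,\ind \{0 \text{ alive} \}] - P(0 \text{ alive},\ \xi_t(0) \geq 1). $$
Since~$\phi_0 \geq 0$ the first term is at most~$E(\phi)$, so integrating and using~$m(t) \geq 0$, $m(0)=c$ yields~$\int_0^t P(0 \text{ alive},\ \xi_s(0) \geq 1)\,ds \leq c + t\,E(\phi)$. Writing~$P(0 \text{ alive at } s) = P(0 \text{ alive},\ \xi_s(0)\geq 1) + P(0 \text{ alive},\ \xi_s(0) = 0)$ and noting that deaths occur at rate~$\ind\{\text{alive},\,\xi=0\}$, so that~$\int_0^t P(0 \text{ alive},\ \xi_s(0)=0)\,ds = P(0 \text{ dead by } t) \leq 1$, I obtain~$\int_0^t P(0 \text{ alive at } s)\,ds \leq c + t\,E(\phi) + 1$. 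Finally~$P(0 \text{ alive at } s) \geq P(0 \text{ survives forever}) = 1 - l$, whence~$t(1-l) \leq c + t\,E(\phi) + 1$; dividing by~$t$ and letting~$t \to \infty$ gives~$1 - l \leq E(\phi)$, i.e.\ $l \geq 1 - E(\phi)$, a positive bound independent of~$c$ and~$\mu$.

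The delicate point is the exact cancellation of the cooperation terms and its validity uniformly in~$\mu$, including the singular case~$\mu = \infty$. For finite~$\mu$ it follows from Dynkin's formula once one checks the fortune at a single site is integrable (it is dominated by~$c$ plus the coins earned and the coins received across the two incident edges, each a rate‑bounded input). For~$\mu = \infty$ the generator carries infinite rates, so I would either pass to the limit~$\mu \to \infty$ in the already~$\mu$‑uniform inequality~$l \geq 1 - E(\phi)$, or argue directly from the perfect‑cooperation graphical construction that cooperation merely relocates coins between neighbors and therefore contributes a spatial divergence of zero expected mean. Making this zero‑mean property and the integrability rigorous — rather than the elementary algebra that follows from the balance — is where the real work lies; the ergodicity invoked in the first step is standard for factors of product measures.
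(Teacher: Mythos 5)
Your argument is correct in its essentials but follows a genuinely different route from the paper. The paper never writes down a mass balance: it first constructs explicit events $A_1,\dots,A_4$ in the graphical representation that force the origin to die before time one, then (because the probability of those events degenerates with $c$) introduces $\ep$-sinks --- vertices $z$ with $\sum_{i=-m}^{n}\phi_{z+i}\leq (m+n+1)(1-\ep)$ for all $m,n$ --- whose density $a^2>0$ depends only on the law of $\phi$, and finally shows by a recursive interval-shrinking argument (each sink is trapped at time one between two dead vertices, and the total fortune in the trapped interval is a negative-drift walk) that every sink dies. Your conservation-of-money argument replaces all of this local machinery with a single global identity: translation invariance kills the cooperation flux at the origin, and integrating the resulting balance gives the explicit, quantitative bound $l\geq 1-E(\phi)$, uniform in $c$ and $\mu$. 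This is shorter, gives an explicit constant where the paper's $a^2$ is not explicit, and --- unlike the sink argument, which is genuinely one-dimensional (a dead vertex must disconnect the line to trap a sink) --- would extend verbatim to $\Z^d$ or any vertex-transitive graph. What the paper's approach buys in exchange is structural information (it identifies \emph{which} vertices die, as a function of the environment alone) and a proof that is pathwise and therefore indifferent to the value of $\mu$, including $\mu=\infty$.

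The one place where your proposal is not yet a proof is the point you yourself flag: the case $\mu=\infty$. For finite $\mu$ the expected coin flux across an edge by time $t$ is bounded by $\mu t$ and the cancellation is routine; under perfect cooperation a single earning event can trigger a cascade relaying coins across arbitrarily many edges, so the finiteness of the expected flux across $(0,1)$ (needed before you may subtract two equal fluxes) requires an argument, and the alternative of passing to the limit $\mu\to\infty$ requires knowing that $l_\mu\to l_\infty$, which is not automatic. Since the theorem as stated (and Lemma~\ref{lem:remove} explicitly) covers all $\mu\in[0,\infty]$, this case must be closed one way or the other; everything else in your outline, including the ergodic-theorem identification of $l=P(\xi_t(0)=-1\ \hbox{for some}\ t)$ and the compensator identity $\int_0^tP(0\ \hbox{alive},\ \xi_s(0)=0)\,ds=P(0\ \hbox{dead by}\ t)\leq 1$, is sound.
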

 To prove this result, we first identify a collection of events that ensure that a given agent dies before time one.
 This, together with the ergodic theorem, implies that the density of agents that die before time one is positive.
 This density, however, depends \emph{a priori} on the initial fortune.
 Then, we define a sink as a vertex such that the agents in any finite interval that contains this vertex earn overall less than they spend.
 The law of large numbers implies that the density of sinks is bounded from below by a constant that does not depend on the initial fortune.
 Using finally that, at time one, each sink is located between two agents who already died, we use a recursive argument to prove that each sink
 dies eventually.
 In conclusion, the density of individuals who die eventually is bounded from below by the density of sinks which, in turn, is bounded from below
 by a positive constant that does not depend on the initial fortune.
 This gives the result. \\
\indent The proof of Theorem~\ref{th:1D-sink} also suggests that, when the expected value of~$\phi$ is larger than one, the density of agents who live
 forever can be made arbitrarily close to one by choosing the initial fortune~$c$ large enough.
 The proof of this result, however, requires additional arguments that we were not able to make rigorous.


\section{Proof of Theorems~\ref{th:n} and~\ref{th:n-compare}}
\label{sec:n}

\indent In this section, we start by collecting some preliminary results about martingales that will be used later to prove the first two theorems.
 The first step is to estimate probabilities related to the continuous-time Markov chain~$(W_t)$ with transition rates
\begin{equation}
\label{eq:rates}
  \begin{array}{rcl}
  \lim_{\ep \to 0} \,\ep^{-1} \,P (W_{t + \ep} = W_t + 1) & \n = \n & \sum_{z \in \V} \phi_z \vspace*{4pt} \\
  \lim_{\ep \to 0} \,\ep^{-1} \,P (W_{t + \ep} = W_t - 1) & \n = \n & \card (\V) = n. \end{array}
\end{equation}
 Recall from~\eqref{eq:2-keys} that~$\bphi = (1/n) \sum_{z \in \V} \phi_z$.
\begin{lemma} --
\label{lem:martingale}
 The process~$(\bphi^{- W_t})$ is a martingale.
\end{lemma}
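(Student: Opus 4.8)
The plan is to verify the martingale property by showing that the function $f(w) = \bphi^{-w}$ is harmonic for the generator of the chain $(W_t)$. Since the chain jumps from $w$ to $w+1$ at rate $\sum_{z \in \V} \phi_z = n\bphi$ and from $w$ to $w-1$ at rate $n$, its generator acts on functions of the integer state by
$$ \mathcal{L} f(w) = n\bphi \,(f(w+1) - f(w)) + n\,(f(w-1) - f(w)). $$
Substituting $f(w) = \bphi^{-w}$ and factoring out $\bphi^{-w}$, the whole computation collapses to the single cancellation
$$ n\bphi \,(\bphi^{-1} - 1) + n\,(\bphi - 1) = n(1 - \bphi) + n(\bphi - 1) = 0, $$
so $\mathcal{L} f \equiv 0$. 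The standard correspondence between harmonic functions and martingales for continuous-time Markov chains then gives that $(\bphi^{-W_t})$ is a martingale.

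To turn this into a self-contained argument and at the same time dispose of integrability, I would instead compute the conditional expectation directly, using the representation of $(W_t)$ as the superposition of two independent Poisson processes. On the increment over $(t, t+s]$, write $W_{t+s} - W_t = U - D$ where $U$, the number of up-steps, is Poisson with mean $n\bphi s$ and $D$, the number of down-steps, is an independent Poisson variable with mean $ns$. The Markov property yields
$$ E (\bphi^{-W_{t+s}} \mid \F_t) = \bphi^{-W_t} \,E (\bphi^{-U}) \,E (\bphi^{D}), $$
and inserting the probability generating function $E (r^X) = e^{\lambda (r - 1)}$ of a Poisson variable with mean $\lambda$ gives the two factors $e^{n\bphi s (\bphi^{-1} - 1)} = e^{ns (1 - \bphi)}$ and $e^{ns (\bphi - 1)}$, whose product is exactly one. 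This simultaneously shows that each $\bphi^{-W_t}$ is integrable and that $E (\bphi^{-W_{t+s}} \mid \F_t) = \bphi^{-W_t}$, which is the martingale identity.

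The algebra here is a one-line exponential cancellation, so I do not expect any real difficulty in the computation itself. The only step deserving attention is integrability: when $\bphi > 1$ the function $\bphi^{-w}$ blows up as $w \to -\infty$, so one cannot appeal to boundedness and a careless application of Dynkin's formula would be unjustified. This is precisely why I favor the explicit Poisson calculation of the second paragraph, which makes the finiteness of $E (\bphi^{-W_t})$ transparent and in fact shows this expectation is constant in $t$.
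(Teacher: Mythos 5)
Your argument is correct, and its first paragraph is essentially the paper's proof: the paper verifies that $\lim_{\ep \to 0} \ep^{-1} E(\bphi^{-W_{t+\ep}} - \bphi^{-W_t} \,|\, \F_t) = 0$ by exactly the cancellation $n\bphi\,(\bphi^{-1}-1) + n\,(\bphi-1) = 0$, i.e.\ it checks that $w \mapsto \bphi^{-w}$ is harmonic for the generator and stops there. Where you genuinely depart from the paper is the second paragraph. Because the jump rates of $(W_t)$ do not depend on the state, the process really is $W_0$ plus the difference of two independent Poisson processes, so your factorization $E(\bphi^{-W_{t+s}} \,|\, \F_t) = \bphi^{-W_t}\,E(\bphi^{-U})\,E(\bphi^{D})$ is valid, and the generating-function identity $E(r^X) = e^{\lambda(r-1)}$ gives the product $e^{ns(1-\bphi)}\,e^{ns(\bphi-1)} = 1$. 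This buys you two things the paper's proof does not supply: an honest verification of integrability of $\bphi^{-W_t}$ (nontrivial when $\bphi > 1$, since $\bphi^{-w} \to \infty$ as $w \to -\infty$), and the martingale identity over a finite time increment rather than only infinitesimally. The paper's version is shorter and generalizes more readily to chains with state-dependent rates, where the independent-Poisson decomposition is unavailable; yours is the more self-contained and rigorous argument for this particular constant-rate chain. Both are correct.
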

\begin{proof}
 Letting~$(\F_t)$ denote the natural filtration of the process~$(W_t)$, and recalling the expression of its transition rates in~\eqref{eq:rates}, we get
 $$ \begin{array}{l}
    \lim_{\ep \to 0} \,\ep^{-1} \,E (\bphi^{- W_{t + \ep}} - \bphi^{- W_t} \,| \,\F_t) \vspace*{4pt} \\ \hspace*{25pt} =
   (\sum_{z \in \V} \phi_z)(\bphi^{- (W_t + 1)} - \bphi^{- W_t}) + n \,(\bphi^{- (W_t - 1)} - \bphi^{- W_t}) \vspace*{4pt} \\ \hspace*{25pt} =
     n \,\bphi \,(\bphi^{- (W_t + 1)} - \bphi^{- W_t}) + n \,(\bphi^{- (W_t - 1)} - \bphi^{- W_t}) \vspace*{4pt} \\ \hspace*{25pt} =
     n \,(\bphi^{- W_t} - \bphi^{- (W_t - 1)}) + n \,(\bphi^{- (W_t - 1)} - \bphi^{- W_t}) = 0, \end{array} $$
 which shows that~$(\bphi^{- W_t})$ is a martingale.
\end{proof} \\ \\
 To state our next results, we define the stopping times
 $$ T_i = \inf \,\{t : W_t = i \} \quad \hbox{for all} \quad i \in \Z. $$
\begin{lemma} --
\label{lem:ost}
 Assume that~$M \leq nc \leq N$ and~$\bphi \neq 1$. Then,
 $$ p (M, N) = P (T_N < T_M \,| \,W_0 = nc) = \frac{1 - \bphi^{- (nc - M)}}{1 - \bphi^{- (N - M)}}. $$
\end{lemma}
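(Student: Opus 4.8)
The plan is to exploit the martingale $(\bphi^{-W_t})$ established in Lemma~\ref{lem:martingale} together with the optional stopping theorem, which is the natural tool for a gambler's-ruin computation of this kind; the chain $(W_t)$ is, after all, a nearest-neighbor chain that increases at rate $\sum_z \phi_z = n\bphi$ and decreases at rate $n$, so its embedded jump chain is a biased simple random walk. First I would introduce the stopping time $T = T_M \wedge T_N$, the first time $(W_t)$ reaches either the lower level $M$ or the upper level $N$, and observe that on the event $\{T < \infty\}$ the chain sits at exactly one of these two levels at time $T$, so that
\[
  W_T = N \cdot \ind \{T_N < T_M\} + M \cdot \ind \{T_M < T_N\}.
\]

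Before invoking optional stopping I would verify its hypotheses. Since $W_0 = nc$ lies in $[M, N]$ and the chain is stopped upon leaving this interval, the stopped process $W_{t \wedge T}$ takes values in $\{M, M+1, \ldots, N\}$; because $k \mapsto \bphi^{-k}$ is monotone, the stopped martingale $\bphi^{-W_{t \wedge T}}$ is therefore bounded by $\max (\bphi^{-M}, \bphi^{-N})$. It then remains to argue that $T < \infty$ almost surely, and this is the one genuinely delicate point. It holds because $(W_t)$ is a continuous-time chain whose up- and down-rates $n\bphi$ and $n$ are strictly positive and bounded away from zero on the finite set $\{M, \ldots, N\}$: a standard comparison with a geometric number of jumps, each of bounded holding time, shows that the chain exits the interval in finite expected time, so in particular $T < \infty$ almost surely. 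Combined with the boundedness of the stopped martingale and dominated convergence, this lets me pass to the limit and conclude $E (\bphi^{-W_T}) = \bphi^{-W_0} = \bphi^{-nc}$.

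With the expectation in hand, writing $p = p(M,N) = P (T_N < T_M \mid W_0 = nc)$ and using the two-point description of $W_T$ above, the martingale identity becomes
\[
  \bphi^{-nc} = p \, \bphi^{-N} + (1 - p) \, \bphi^{-M}.
\]
Solving this linear equation gives $p = (\bphi^{-nc} - \bphi^{-M})/(\bphi^{-N} - \bphi^{-M})$, and multiplying numerator and denominator by $-\bphi^{M}$ puts it in the stated form $(1 - \bphi^{-(nc - M)})/(1 - \bphi^{-(N-M)})$. The hypothesis $\bphi \neq 1$ is precisely what keeps the denominator $1 - \bphi^{-(N-M)}$ nonzero, so the final division is legitimate. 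The main obstacle, as noted, is the a.s.\ finiteness of $T$, which justifies optional stopping; everything after that is routine algebra.
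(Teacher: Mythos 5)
Your proposal is correct and follows essentially the same route as the paper: apply the optional stopping theorem to the martingale $(\bphi^{-W_t})$ at $T = T_M \wedge T_N$, use the two-point decomposition of $W_T$, and solve the resulting linear equation for $p(M,N)$. The only difference is that you spell out the verification that $T < \infty$ almost surely and that the stopped martingale is bounded, details the paper states without elaboration.
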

\begin{proof}
 Since~$(\bphi^{- W_t})$ is a martingale and the process stopped at time~$T = \min (T_M, T_N)$ is bounded, we may apply the optional stopping theorem to get
\begin{equation}
\label{eq:ost-1}
  E (\bphi^{- W_T}) = E (\bphi^{- W_0}) = \bphi^{- nc}.
\end{equation}
 Note also that
\begin{equation}
\label{eq:ost-2}
 \begin{array}{rcl}
   E (\bphi^{- W_T}) & \n = \n & E (\bphi^{- W_T} \,| \,T = T_M)(1 - p (M, N)) + E (\bphi^{- W_T} \,| \,T = T_N) \,p (M, N) \vspace*{4pt} \\
                     & \n = \n & \bphi^{- M} \,(1 - p (M, N)) + \bphi^{- N} \,p (M, N) \vspace*{4pt} \\
                     & \n = \n & (\bphi^{- N} - \bphi^{- M}) \,p (M, N) + \bphi^{- M}. \end{array}
\end{equation}
 Combining~\eqref{eq:ost-1} and~\eqref{eq:ost-2}, we conclude that
 $$ p (M, N) = \frac{\bphi^{- nc} - \bphi^{- M}}{\bphi^{- N} - \bphi^{- M}} = \frac{1 - \bphi^{- (nc - M)}}{1 - \bphi^{- (N - M)}}. $$
 This completes the proof.
\end{proof}
\begin{lemma} --
\label{lem:monotone-cv}
 For all~$M \leq nc$ and all~$\bphi > 0$,
 $$ q (M) = P (T_M = \infty \,| \,W_0 = nc) = \max \,(0, 1 - \bphi^{- (nc - M)}). $$
\end{lemma}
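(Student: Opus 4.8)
The plan is to obtain~$q(M)$ as the limit of the gambler's-ruin probabilities~$p(M, N)$ of Lemma~\ref{lem:ost} by letting~$N \to \infty$. The key observation is that, for integers~$N > nc$, the events~$A_N = \{T_N < T_M\}$ form a decreasing sequence (reaching a higher level before~$M$ forces reaching every intermediate level before~$M$, since~$(W_t)$ moves by unit steps) whose limit is exactly the event that the walk never hits~$M$. Granting the identity $\bigcap_N A_N = \{T_M = \infty\}$ up to a null set, the continuity of probability along decreasing events gives
$$ q (M) = P (T_M = \infty \mid W_0 = nc) = \lim_{N \to \infty} P (A_N \mid W_0 = nc) = \lim_{N \to \infty} p (M, N), $$
after which it remains only to evaluate the limit. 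The delicate step, and the main obstacle, is precisely this event identity; the subsequent limit computations are routine.

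First I would establish the identity. The inclusion $\bigcap_N A_N \subseteq \{T_M = \infty\}$ is a pathwise statement: if~$T_M < \infty$, then~$(W_t)$ makes only finitely many jumps on the compact interval~$[0, T_M]$, hence stays below some finite level~$K$ up to time~$T_M$, so that~$T_N > T_M$ for every~$N > K$ and the path leaves~$A_N$ for large~$N$. For the reverse inclusion $\{T_M = \infty\} \subseteq \bigcap_N A_N$, note that on~$\{T_M = \infty\}$ the condition~$T_N < T_M$ reduces to~$T_N < \infty$, so it suffices that the walk reach every level~$N > nc$. When~$\bphi > 1$ the upward rate~$n \bphi$ exceeds the downward rate~$n$, so~$(W_t)$ is transient to~$+\infty$ and hits every level almost surely; when~$\bphi \leq 1$ the walk is recurrent or transient to~$-\infty$, whence~$P (T_M = \infty) = 0$ and the inclusion holds trivially up to a null set. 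This is where the drift dichotomy must be invoked.

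It then remains to compute~$\lim_{N \to \infty} p (M, N)$. When~$\bphi > 1$ we have~$\bphi^{-(N - M)} \to 0$, so by Lemma~\ref{lem:ost}
$$ \lim_{N \to \infty} p (M, N) = \lim_{N \to \infty} \frac{1 - \bphi^{-(nc - M)}}{1 - \bphi^{-(N - M)}} = 1 - \bphi^{-(nc - M)}, $$
which is nonnegative since~$nc \geq M$ and thus equals~$\max (0, 1 - \bphi^{-(nc - M)})$. When~$0 < \bphi < 1$ instead~$\bphi^{-(N - M)} \to \infty$ while the numerator stays fixed and negative, so the ratio tends to~$0$, again matching~$\max (0, 1 - \bphi^{-(nc - M)}) = 0$. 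Finally, when~$\bphi = 1$ Lemma~\ref{lem:ost} does not apply, but here the equality of the up and down rates makes~$(W_t)$ itself a martingale; applying the optional stopping theorem to the bounded process stopped at~$T = \min (T_M, T_N)$ yields the symmetric gambler's-ruin formula~$p (M, N) = (nc - M)/(N - M)$, which tends to~$0$ as~$N \to \infty$ and matches~$\max (0, 1 - 1^{-(nc - M)}) = 0$. In every case the limit equals~$\max (0, 1 - \bphi^{-(nc - M)})$, as claimed.
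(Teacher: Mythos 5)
Your proposal is correct and follows essentially the same route as the paper: both obtain $q(M)$ as the monotone limit of the gambler's-ruin probabilities $p(M,N)$ from Lemma~\ref{lem:ost} when $\bphi > 1$, and dispose of the cases $\bphi \leq 1$ via recurrence or transience to $-\infty$. Your pathwise justification of the event identity $\bigcap_N \{T_N < T_M\} = \{T_M = \infty\}$ is a bit more explicit than the paper's, but the argument is the same in substance.
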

\begin{proof}
 We distinguish three cases depending on the value of~$\bphi$.
\begin{itemize}
 \item When~$\bphi = 1$, the process~$(W_t)$ is the one-dimensional symmetric random walk which is known to be recurrent. This gives the probability~$P (T_M = \infty) = 0$. \vspace*{4pt}
 \item When~$\bphi < 1$, the law of large numbers implies that~$W_t \to - \infty$ almost surely.
       In particular, the stopping time~$T_M$ is again almost surely finite and the probability~$q (M) = 0$. \vspace*{4pt}
 \item When~$\bphi > 1$, the law of large numbers now gives~$W_t \to \infty$ so
       $$ \{T_M = \infty \} = \{T_N < T_M \ \hbox{for all} \ N \geq nc \} \quad \hbox{almost surely}. $$
       Since in addition we have the inclusions
       $$ \{T_{N + 1} < T_M \} \subset \{T_N < T_M \} \quad \hbox{for all} \quad N \geq nc, $$
       by monotone convergence and Lemma~\ref{lem:ost}, we get
       $$ \begin{array}{rcl}
            q (M) & \n = \n & P (T_N < T_M \ \hbox{for all} \ N \geq nc \,| \,W_0 = nc) \vspace*{4pt} \\
                  & \n = \n & P (\lim_{N \to \infty} \,\{T_N < T_M \} \,| \,W_0 = nc) \vspace*{4pt} \\
                  & \n = \n & \lim_{N \to \infty} \,P (T_N < T_M \,| \,W_0 = nc) = 1 - \bphi^{- (nc - M)}. \end{array} $$
\end{itemize}
 Observing also that~$1 - \bphi^{- (nc - M)} \leq 0$ if and only if~$\bphi \leq 1$ gives the result.
\end{proof} \\ \\
 Lemma~\ref{lem:monotone-cv} is the main ingredient to prove Theorem~\ref{th:n}.
 To see the connection between the previous martingale results and the economical system, define
 $$ \begin{array}{l} \tau = \inf \,\{t : \xi_t (x) = - 1 \ \hbox{for some} \ x \in \V \} \quad \hbox{and} \quad Z_t = \sum_{z \in \V} \xi_t (z) \end{array} $$
 and observe that, before time~$\tau$, the individual at~$z$ is alive, earns one coin at rate~$\phi_z$ and spends one coin at rate one, therefore
 $$ \begin{array}{rcl}
    \lim_{\ep \to 0} \,\ep^{-1} \,P (Z_{t + \ep} = Z_t + 1 \,| \,\tau > t) & \n = \n & \sum_{z \in \V} \phi_z \vspace*{4pt} \\
    \lim_{\ep \to 0} \,\ep^{-1} \,P (Z_{t + \ep} = Z_t - 1 \,| \,\tau > t) & \n = \n & \card (\V) = n. \end{array} $$
 In other words, by time~$\tau$, the total number of coins behaves like the Markov chain~$(W_t)$.
 Using this and the previous lemma, we can now prove the theorem. \\ \\
\begin{proofof}{Theorem~\ref{th:n}}
 In the limiting case~$\mu = \infty$ and as long as all the individuals are alive, each time an individual has at least two more coins than one of her neighbors, this individual instantaneously
 gives a coin to one of her poorest neighbors, therefore 
 $$ |\xi_t (x) - \xi_t (y)| \leq 1 \quad \hbox{for all} \quad (x, y) \in \E \ \hbox{and} \ t < \tau. $$
 Now, letting~$x, y \in \V$ be arbitrary, there exist
 $$ z_0 = x, z_1, \ldots, z_d = y \in \V \quad \hbox{such that} \quad (z_i, z_{i + 1}) \in \E \ \ \hbox{for all} \ \ i = 0, 1, \ldots, d - 1 $$
 where~$d = d (x, y)$.
 In particular, the triangle inequality implies that
\begin{equation}
\label{eq:triangle}
  \begin{array}{rcl}
    |\xi_t (x) - \xi_t (y)| & \n \leq \n & |\xi_t (z_0) - \xi_t (z_1)| + \cdots + |\xi_t (z_{d - 1}) - \xi_t (z_d)| \vspace*{4pt} \\
                            & \n \leq \n &   d = d (x, y) \end{array}
\end{equation}
 for all~$t < \tau$.
 Now, on the event that~$\tau < \infty$, just before that time, there is at least one vertex, say~$x$, with zero coin, while the other vertices have a positive fortune.
 This, together with~\eqref{eq:triangle}, implies that the total number of coins satisfies
 $$ \begin{array}{l} Z_{\tau-} = \sum_{z \in \V} \xi_{\tau-} (z) = \sum_{z \in \V} |\xi_{\tau-} (x) - \xi_{\tau-} (z)| \leq \sum_{z \in \V} d (x, z). \end{array} $$
 Taking the maximum over all possible configurations gives
 $$ \begin{array}{l} Z_{\tau-} \leq \max_{x \in \V} \sum_{z \in \V} d (x, z) = \D. \end{array} $$
 Finally, using Lemma~\ref{lem:monotone-cv} and observing that all the individuals survive if and only if~$\tau = \infty$ give the following lower bound for the probability of global survival
 $$ \begin{array}{rcl}
     p_{\infty} (c, (\phi_z)) & \n = \n & P (\tau = \infty \,| \,\xi_0 (z) = c \ \hbox{for all} \ z \in \V) \vspace*{4pt} \\
                              & \n \geq \n & P (Z_t \geq \D \ \hbox{for all} \ t \,| \,\xi_0 (z) = c \ \hbox{for all} \ z \in \V) \vspace*{4pt} \\
                              & \n = \n & P (W_t > \D - 1 \ \hbox{for all} \ t \,| \,W_0 = nc) \vspace*{4pt} \\
                              & \n = \n & P (T_{\D - 1} = \infty \,| \,W_0 = nc) = q (\D - 1) \vspace*{4pt} \\
                              & \n = \n & \max \,(0, 1 - \bphi^{- (nc - \D + 1)}). \end{array} $$
 This completes the proof of the theorem.
\end{proofof} \\ \\
 Using Lemma~\ref{lem:monotone-cv} and Theorem~\ref{th:n}, we can now prove Theorem~\ref{th:n-compare}. \\ \\
\begin{proofof}{Theorem~\ref{th:n-compare}}
 It follows from Lemma~\ref{lem:monotone-cv} that, in the presence of only one vertex, say~$x$, the probability of survival is given by
 $$ p_0 (c, \phi_x) = q (- 1) = \max \,(0, 1 - \phi_x^{- (c + 1)}). $$
 Since in the absence of cooperation~$\mu = 0$, the system with~$n$ individuals consists of~$n$ independent copies of a one-person system, we get
 $$ \begin{array}{l} p_0 (c, (\phi_z)) = \prod_{z \in \V} \,p_0 (c, \phi_z) = \prod_{z \in \V} \,\max \,(0, 1 - \phi_z^{- (c + 1)}). \end{array} $$
 It directly follows that
 $$ p_0 (c, (\phi_z)) = 0 \quad \hbox{when} \quad \phi_z \leq 1 \quad \hbox{for some} \quad z \in \V $$
 so the inequality to be proved is obvious in this case.
 Assume now that~$\phi_z > 1$ for all~$z \in \V$.
 In this case, we have the following inequalities:
 $$ \begin{array}{rcl}
    \log \,(p_0 (c, (\phi_z))) & \n = \n & \sum_{z \in \V} \log \,(1 - \phi_z^{- (c + 1)}) \leq - \sum_{z \in \V} \phi_z^{- (c + 1)} \vspace*{4pt} \\
    \log \,(p_{\infty} (c, (\phi_z))) & \n \geq \n & \log (1 - \bphi^{- (nc - \D + 1)}) \geq - \bphi^{- (nc - \D + 1)} / (1 - \bphi^{- (nc - \D + 1)}). \end{array} $$
 In particular, since~$\bphi > 1$, for all~$n \geq 2$ and~$c$ sufficiently large,
 $$ \begin{array}{rcl}
    \log \,(p_{\infty} (c, (\phi_z))) & \n \geq \n & - \bphi^{- (nc - \D + 1)} / (1 - \bphi^{- (nc - \D + 1)}) \vspace*{4pt} \\
                                      & \n \geq \n & - 2 \,\bphi^{- (nc - \D + 1)} \geq - 2 \,(\min_{z \in \V} \phi_z)^{- (nc - \D + 1)} \vspace*{4pt} \\
                                      & \n \geq \n & - (\min_{z \in \V} \phi_z)^{- (c + 1)} \geq - \sum_{z \in \V} \phi_z^{- (c + 1)} \vspace*{4pt} \\
                                      & \n \geq \n & \log \,(p_0 (c, (\phi_z))). \end{array} $$
 This completes the proof of the theorem.
\end{proofof}

\section{Proof of Theorem~\ref{th:2-compare}}
\label{sec:two-person}

\indent As stated in the introduction, the two-person system is simple enough that we may calculate certain probabilities by hand.
 Since there are only two vertices, we will call them~$x$ and~$y$ and the rates at which they earn a coin~$\phi_x$ and~$\phi_y$, respectively.
 To simplify the notation, write
 $$ X_t = \xi_t (x) \quad \hbox{and} \quad Y_t = \xi_t (y) \quad \hbox{for all} \quad t \geq 0. $$
 Letting~$T_- = \inf \,\{t : \min (X_t, Y_t) = - 1 \}$, the process
 $$ \bphi^{- (X_{t \wedge T_-} + Y_{t \wedge T_-})} = \bigg(\frac{2}{\phi_x + \phi_y} \bigg)^{X_{t \wedge T_-} + Y_{t \wedge T_-}} $$
 is again a martingale.
 Using that the individuals' fortune differ by at most one coin in the presence of perfect cooperation, and repeating the proofs of Lemmas~\ref{lem:ost}
 and~\ref{lem:monotone-cv}, we easily show that, when both individuals start with~$c$ coins, the probability of global survival satisfies
 $$ \begin{array}{rcl}
      p_{\infty} (c, \phi_x, \phi_y) & \n = \n & P (\min (X_t, Y_t) \geq 0 \ \hbox{for all} \ t \,| \,X_0 = Y_0 = c) \vspace*{4pt} \\
                                     & \n \geq \n & P (X_t + Y_t > 0 \ \hbox{for all} \ t \,| \,X_0 = Y_0 = c) \vspace*{4pt} \\
                                     & \n = \n & \max \,(0, 1 - (2 / (\phi_x + \phi_y))^{2c}) \end{array} $$
 in the case of perfect cooperation.
 In particular, when
 $$ \phi_x + \phi_y > 2 \quad \hbox{and} \quad \phi_x < 1 < \phi_y, $$
 while individual~$x$ dies almost surely in the absence of cooperation, global survival is possible in the presence of perfect cooperation,
 showing that cooperation is beneficial in this case.
 We now focus on the parameter region
\begin{equation}
\label{eq:poor-rich}
  \phi_x + \phi_y < 2 \quad \hbox{and} \quad \phi_x < 1 < \phi_y
\end{equation}
 and show that, in this case, cooperation is detrimental:
 individual~$x$ again dies almost surely while individual~$y$ is more likely to live forever in the absence of cooperation than in the presence of
 perfect cooperation.
 The probability of survival can be computed explicitly. \\
\indent Using again that the individuals' fortune differ by at most one coin in the presence of perfect cooperation, together with the fact that
 global survival is not possible when~\eqref{eq:poor-rich} holds, implies that the stopping time~$T_-$ is almost surely finite and that
 $$ (X_{T_-}, Y_{T_-}) \in \{(-1, 0), (-1, 1), (0, -1), (1, -1) \}. $$
 To simplify the notation, we rename these four states as well as the three adjacent states as shown in Figure~\ref{fig:transitions} and define
 the stopping times and corresponding probabilities
 $$ \tau_i = \inf \,\{t : (X_t, Y_t) = S_i \} \quad \hbox{and} \quad p_i = P (T_- = \tau_i) \quad \hbox{for} \ i = 1, 2, 3, 4. $$
 The probabilities~$p_i$ are computed explicitly in the next lemma.
\begin{lemma} --
\label{lem:two-person}
 Assume~\eqref{eq:poor-rich} and perfect cooperation. Then,
 $$ p_1 = p_2 = 2 / \Psi \qquad p_3 = \phi_x / \Psi + 1/4 \qquad p_4 = \phi_y / \Psi + 1/4 $$
 where~$\Psi = 8 + 2 \phi_x + 2 \phi_y$.
\end{lemma}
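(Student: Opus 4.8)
The plan is to reduce the four probabilities $p_i$ to a first-step analysis of a finite absorbing Markov chain, and the first job is to produce that chain. Under perfect cooperation the two fortunes never differ by more than one coin, so before $T_-$ the total fortune $X_t + Y_t$ is exactly the walk of~\eqref{eq:rates} with $n = 2$, increasing at rate $\phi_x + \phi_y$ and decreasing at rate~$2$. Assumption~\eqref{eq:poor-rich} makes $\bphi = (\phi_x + \phi_y)/2 < 1$, so this walk has negative drift; by Lemma~\ref{lem:monotone-cv} global survival has probability zero and $T_- < \infty$ almost surely. Whenever $X_t + Y_t = 2$ the bound $|X_t - Y_t| \le 1$ forces $(X_t, Y_t) = (1,1)$, and since the total fortune drifts to $-\infty$ it reaches the level~$2$ only finitely often; hence the last time $\sigma$ with $(X_t, Y_t) = (1,1)$ is almost surely finite, and on $[\sigma, T_-]$ the total fortune stays at most~$1$.

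On $[\sigma, T_-]$ the process therefore visits only the three live states $(0,1), (1,0), (0,0)$ and the four death states $S_1, \dots, S_4$. I would record the jump rates among them: from $(0,0)$ a spending event is fatal, sending the process to $(-1,0)$ or $(0,-1)$ at rate~$1$ each, while an earning event sends it to $(1,0)$ at rate $\phi_x$ or to $(0,1)$ at rate $\phi_y$; from $(0,1)$ the spending of $x$ is fatal and the spending of $y$ returns the process to $(0,0)$, with the mirror description at $(1,0)$. The only moves leaving these seven states are the earning events at $(0,1)$ and $(1,0)$, which would carry the process back to $(1,1)$; because $\sigma$ is the last visit to $(1,1)$, these are exactly the transitions absent on $[\sigma, T_-]$. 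The chain is entered from $(1,1)$ at time $\sigma$, and by symmetry of the two spending rates out of $(1,1)$ the entry state is $(0,1)$ or $(1,0)$ with probability $1/2$ each.

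Computing the $p_i$ is then a routine first-step analysis. Letting $a_i$ denote the probability of absorption at $S_i$ started from $(0,0)$, the one-step relations above give a linear system of size three whose solution I would substitute back to obtain the absorption probabilities started from $(0,1)$ and from $(1,0)$, and then average against the entry distribution. The denominators that appear are $2 + \phi_x + \phi_y$ and $4 + \phi_x + \phi_y = \Psi/2$, and the computation yields $p_1 = p_2 = 2/\Psi$ for the two exits through $(0,0)$ together with the asymmetric values $\phi_x/\Psi + 1/4$ and $\phi_y/\Psi + 1/4$ for the exits through $(1,0)$ and $(0,1)$. The identity $\Psi = 2(4 + \phi_x + \phi_y)$ then gives $\sum_i p_i = 1$, a useful consistency check.

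The delicate step is the Markov reduction of the first two paragraphs, not the algebra. Because $\sigma$ is defined as the \emph{last} visit to $(1,1)$, restricting to $[\sigma, T_-]$ conditions on the future, and I would need to verify not only that the trace of the process on this interval is a genuinely time-homogeneous Markov chain on the seven states, but that its one-step transition probabilities are precisely the bare jump rates used above rather than a reweighting induced by the conditioning. This is the single point where the conditioning could in principle alter the one-step law, and settling it—most naturally by decomposing the trajectory into independent excursions away from $(1,1)$ and analysing a single excursion—is where essentially all of the probabilistic content lies; once it is pinned down, the evaluation of the $p_i$ is immediate.
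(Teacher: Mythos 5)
Your write-up follows the paper's own argument almost line for line: a last-exit decomposition at $(1,1)$, a seven-state chain on $[\sigma,T_-]$ carrying the bare spending/earning rates with the earning transitions out of $(1,0)$ and $(0,1)$ deleted, entry into $(1,0)$ or $(0,1)$ with probability $1/2$ each, and then a first-step analysis. The issue you flag in your final paragraph --- that restricting to the interval after the \emph{last} visit to $(1,1)$ conditions on the future and could reweight the one-step law --- is precisely the point the paper passes over in silence, and it is not a removable technicality: the conditioning genuinely changes the jump probabilities. The relevant harmonic function is $h(s)=P_s(\hbox{death before a return to }(1,1))$, and $h(0,0)<1$ because from $(0,0)$ the process can earn its way back up to $(1,1)$; hence in the conditioned chain the two rate-one moves out of $(1,0)$ (to $(0,0)$, reweighted by $h(0,0)$, versus to the death state $(1,-1)$, reweighted by $1$) are \emph{not} equally likely, and the system $p_{i6}=\tfrac12 p_{i5}+\tfrac12\delta_{i3}$ does not compute $P(T_-=\tau_i)$.

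You can see this concretely by computing the $p_i$ with no conditioning at all: keep the transitions from $(1,0)$ and $(0,1)$ back to $(1,1)$, and note that from any live configuration with total fortune at least $3$ the process must pass through total fortune $2$ --- that is, through $(1,1)$ --- before anyone can die, so all such states may be lumped into a single state that returns to $(1,1)$ with probability one; the $p_i$ then solve a finite linear system. Take $\phi_x=0$ and $\phi_y=3/2$, which satisfy the hypotheses. Solving the system gives $p_1=p_2=2/11$, in agreement with the lemma, but $p_3=43/154\approx 0.279$ and $p_4=5/14\approx 0.357$, whereas the stated formulas give $1/4$ and $17/44\approx 0.386$. (Both vectors sum to one, so that consistency check cannot detect the discrepancy.) In general one finds $p_3=((2+\phi)^2-\phi+2\phi_x)/((2+\phi)\Psi)$ with $\phi=\phi_x+\phi_y$, and symmetrically for $p_4$; these agree with the lemma only when $\phi_x=\phi_y$. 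So the step you isolated as carrying ``essentially all of the probabilistic content'' is in fact where the argument fails --- in your proposal and in the paper alike --- and the excursion analysis you sketch, carried out honestly, produces corrected values of $p_3$ and $p_4$ rather than the ones in the statement. (The qualitative conclusion $E_\infty<E_0$ of Theorem~3 survives, since the corrected $p_4$ is smaller than the stated one, but the exact formula there inherits the error.)
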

\begin{figure}[t]
 \centering
 \scalebox{0.50}{\input{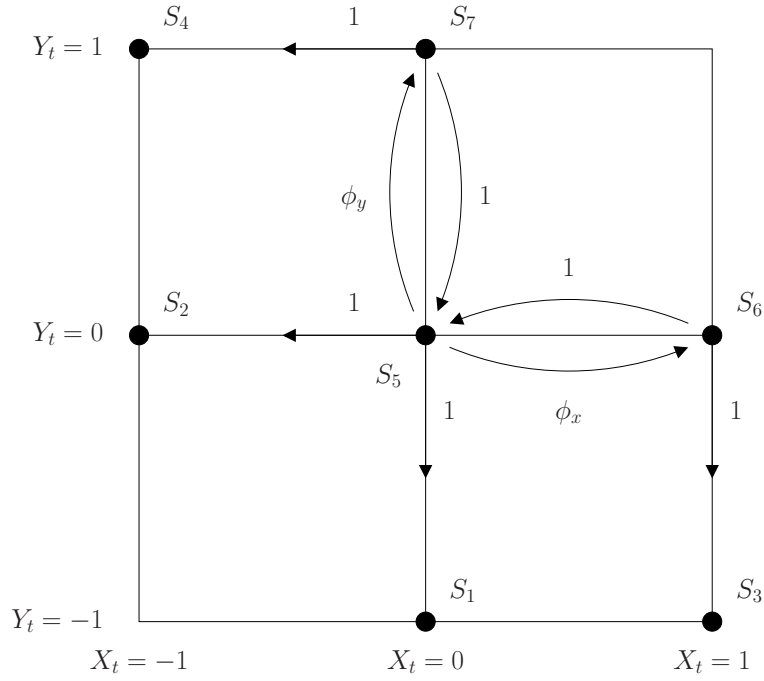}}
 \caption{\upshape The seven states and transition rates between times~$T_+$ and~$T_-$.}
\label{fig:transitions}
\end{figure}
\begin{proof}
 Observe that~$T_-$ is almost surely finite when~\eqref{eq:poor-rich} holds.
 Since in addition the individuals' fortune differ by at most one coin before time~$T_-$,
 $$ T_+ = \sup \,\{t : X_t = Y_t = 1 \} < \infty \quad \hbox{almost surely}. $$
 Also, between time~$T_+$ and time~$T_-$, the process consists of the seven-state continuous-time Markov chain whose transition rates are indicated in Figure~\ref{fig:transitions}.
 Referring again to the picture for the name of the states, we define the conditional probabilities
 $$ p_{ij} = P (T_- = \tau_i \,| \,(X_0, Y_0) = S_j) \quad \hbox{for all} \quad (i, j) \in \{1, 2, 3, 4 \} \times \{5, 6, 7 \}. $$
 Using a first-step analysis and looking at the probabilities at which the process starting from state~$S_5$ jumps to each of the four adjacent states, we get
 $$ p_{15} = \frac{1}{2 + \phi_x + \phi_y} + \frac{\phi_x \,p_{16}}{2 + \phi_x + \phi_y} + \frac{\phi_y \,p_{17}}{2 + \phi_x + \phi_y}. $$
 The same idea gives~$p_{16} = p_{17} = (1/2) \,p_{15}$.
 Solving the system, we get
 $$ p_{15} = \frac{2}{4 + \phi_x + \phi_y} \quad \hbox{and} \quad p_{16} = p_{17} = \frac{1}{4 + \phi_x + \phi_y}. $$
 Since in addition the first state visited after time~$T_+$ is equally likely to be~$S_6$ and~$S_7$, we conclude that the probability~$p_1$ is given by
 $$ p_1 = \frac{p_{16} + p_{17}}{2} = \frac{1}{4 + \phi_x + \phi_y} = \frac{2}{\Psi} $$
 which, by symmetry, is also the value of~$p_2$.
 To compute~$p_3$, we again use a first-step analysis to obtain a system involving the three conditional probabilities:
 $$ p_{35} = \frac{\phi_x \,p_{36}}{2 + \phi_x + \phi_y} + \frac{\phi_y \,p_{37}}{2 + \phi_x + \phi_y} \qquad p_{36} = \frac{1}{2} + \frac{p_{35}}{2} \qquad p_{37} = \frac{p_{35}}{2}. $$
 Solving the system gives
 $$ p_{35} = \frac{\phi_x}{4 + \phi_x + \phi_y} \qquad p_{36} = \frac{1}{2} + \frac{\phi_x}{8 + 2 \phi_x + 2 \phi_y} \qquad p_{37} = \frac{\phi_x}{8 + 2 \phi_x + 2 \phi_y} $$
 from which it follows as before that
 $$ p_3 = \frac{p_{36} + p_{37}}{2} = \frac{\phi_x}{8 + 2 \phi_x + 2 \phi_y} + \frac{1}{4} = \frac{\phi_x}{\Psi} + \frac{1}{4}. $$
 By symmetry, the value of~$p_4$ is obtained by exchanging the role of~$\phi_x$ and~$\phi_y$ in the previous expression, which completes the proof.
\end{proof} \\ \\
 Using the previous lemma as well as Lemma~\ref{lem:monotone-cv} and conditioning on the first boundary state visited, we deduce that the expected number of individuals that
 survive in the presence of perfect cooperation, which is also the probability that~$y$ survives, is given by
 $$ E_{\infty} (c, \phi_x, \phi_y) = p_2 \,p_0 (0, \phi_y) + p_4 \,p_0 (1, \phi_y)
                                   = \bigg(\frac{2}{\Psi} \bigg) \bigg(1 - \frac{1}{\phi_y} \bigg) + \bigg(\frac{\phi_y}{\Psi} + \frac{1}{4} \bigg) \bigg(1 - \bigg(\frac{1}{\phi_y} \bigg)^2 \bigg). $$
 Since in addition
 $$ 1 - \frac{1}{\phi_y} < 1 - \bigg(\frac{1}{\phi_y} \bigg)^2 \leq 1 - \bigg(\frac{1}{\phi_y} \bigg)^{c + 1} $$
 for all~$\phi_y > 1$ and~$c \geq 1$, and since
 $$ \bigg(\frac{2}{\Psi} \bigg) + \bigg(\frac{\phi_y}{\Psi} + \frac{1}{4} \bigg) = P (T_- = \tau_2 \ \hbox{or} \ T_- = \tau_4) \leq 1, $$
 we conclude that
 $$ E_{\infty} (c, \phi_x, \phi_y) < 1 - \bigg(\frac{1}{\phi_y} \bigg)^{c + 1} = E_0 (c, \phi_x, \phi_y). $$
 This completes the proof of Theorem~\ref{th:2-compare}.

\section{Proof of Theorem~\ref{th:1D-sink}}

\indent As explained in the introduction, the first step to prove Theorem~\ref{th:1D-sink} is to identify a collection of events that simultaneously occur with positive probability and ensure that a given vertex,
 say the origin, dies before time one.
 These events are defined from the collection of independent Poisson processes introduced at the end of the model description as follows:
 $$ \begin{array}{rcl}
      A_1 & \n = \n & \{N_1^+ (0) = 0 \ \hbox{and} \ N_1^- (0) \geq (c + 1)^2 \} \vspace*{4pt} \\
      A_2 & \n = \n & \{N_1^+ (z) = N_1^- (z) = 0 \ \hbox{for all} \ z \in \Z \ \hbox{such that} \ 0 < |z| \leq c + 1 \} \vspace*{4pt} \\
      A_3 & \n = \n & \{N_1^+ (c + 2) + \cdots + N_1^+ (c + n + 1) \leq n \ \hbox{for all} \ n > 0 \} \vspace*{4pt} \\
      A_4 & \n = \n & \{N_1^+ (- (c + 2)) + \cdots + N_1^+ (- (c + n + 1)) \leq n \ \hbox{for all} \ n > 0 \}. \end{array} $$
 The times at which neighbors exchange a coin are unimportant in the proof of the theorem.
 Let~$A$ be the event that consists of the intersection of these four events.
\begin{lemma} --
\label{lem:remove}
 For all~$\mu \in [0, \infty]$, we have~$P (\xi_1 (0) = -1 \,| \,A) = 1$.
\end{lemma}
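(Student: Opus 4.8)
The plan is to turn the statement into a purely deterministic assertion about the flow of coins, since on the event $A$ the randomness of the spending and earning clocks is already pinned down. On $A$, the event $A_1$ guarantees that the agent at the origin earns nothing on $[0,1]$ and is asked to spend at least $(c+1)^2$ times. Hence it suffices to bound the total number of coins the origin can \emph{receive} from its two neighbours $-1$ and $1$ before time one: if this number is at most $c(c-1)$, then the origin's fortune never exceeds $c+c(c-1)=c^2<(c+1)^2$, so at the $(c^2+1)$-th spending attempt, which occurs by time one, it holds no coin and is removed. I would therefore aim to prove, uniformly in $\mu\in[0,\infty]$, the inflow bound that the net number of coins crossing each of the edges $(0,1)$ and $(-1,0)$ toward the origin on $[0,1]$ is at most $c(c-1)/2$.

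The first ingredient is a combinatorial \emph{staircase} estimate inside the frozen zone. By $A_2$, every vertex $z$ with $0<|z|\le c+1$ neither earns nor spends on $[0,1]$, so it stays alive, starts at $c$, and its fortune changes only through cooperation. Treating the right half $\{1,\dots,c+1\}$ as a segment that is sealed on the right and feeds the origin on its left, I would show by a monovariant argument, exploiting the rule that a coin crosses an edge leftward only when the two fortunes differ by at least two, that the fortunes can never drop below the staircase profile $\xi_t(k)\ge\min(k,c)$. Conservation of coins on this segment then bounds the amount delivered to the origin from the right by $c(c+1)-\sum_{k=1}^{c+1}\min(k,c)=c(c-1)/2$, and symmetrically on the left. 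This estimate is insensitive to the cooperation rate because the difference rule constrains \emph{any} admissible sequence of exchanges; the fastest regime $\mu=\infty$ is simply the extremal one.

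The delicate point, and the main obstacle, is to rule out replenishment of the frozen zone from the outside, for it is precisely here that the choice of radius $c+1$ matters: the staircase reaches its ceiling $c$ already at position $c\le c+1$, so the boundary vertices $\pm(c+1)$ remain at level $c$ and the interface with the exterior stays flat. A coin can then cross from $\pm(c+2)$ into the frozen zone only if that exterior vertex climbs to level $c+2$. I would use $A_3$ and $A_4$ to preclude this: they cap the cumulative earnings in any initial block of $n$ exterior vertices by $n$, while every exterior vertex still spends at rate one, so the exterior is a net sink whose fortunes cannot be pushed two units above the flat level $c$. Propagating this control inward, by induction on the distance from the frozen zone so that no vertex $\pm(c+1+k)$ ever climbs high enough to send a coin to its inner neighbour, should show that no coin ever crosses the boundary. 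The subtlety is that for $\mu=\infty$ cooperation is instantaneous and the exterior reservoir is infinite, so the argument cannot rest on any finite speed of propagation and must be driven entirely by the cumulative earning bound together with the difference rule.

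Combining the two estimates, on $A$ the origin receives at most $c(c-1)$ coins during $[0,1]$, its fortune stays strictly below $(c+1)^2\le N_1^-(0)$, and it is therefore removed at one of its spending times before time one; this yields $P(\xi_1(0)=-1\mid A)=1$.
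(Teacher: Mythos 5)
Your architecture matches the paper's: freeze the window of radius $c+1$ around the origin with $A_1\cap A_2$, use the rule that a coin crosses an edge only down a gradient of at least two to get a staircase lower bound on the window, count coins to conclude that the origin can honor at most $c^2<(c+1)^2$ of its spending demands and therefore dies, and invoke $A_3, A_4$ to keep the exterior from refilling the window. The interior half of this is sound: your bound $\xi_t(k)\ge\min(k,c)$ is a slightly sharper version of the paper's $\xi_t(z)\ge|z|-1$, both are preserved by the dynamics inside the frozen zone, and either one makes the arithmetic against $(c+1)^2$ work.

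The gap is in the exterior step, exactly where you place ``the main obstacle.'' Your inductive claim --- that no vertex $\pm(c+1+k)$ ever climbs high enough to send a coin to its inner neighbour, because the exterior ``fortunes cannot be pushed two units above the flat level $c$'' --- is false for $k\ge 2$. The event $A_3$ only constrains prefix sums of the earnings, so it allows $N_1^+(c+2)=0$ and $N_1^+(c+3)=2$; then vertex $c+3$ reaches level $c+2$, sits two above vertex $c+2$ (still at level $c$), and does send a coin inward to $c+2$. More generally vertex $c+1+k$ may earn up to $k$ coins and reach level $c+k$, so the induction has no valid hypothesis away from the boundary; what you actually need is the position-dependent bound $\xi_t(c+1+k)\le c+k$ (equivalently, that only the single edge $(c+1,c+2)$ never carries a coin inward), and your sketch does not establish it. A second problem is that exterior vertices spend, so vertex $c+2$ can drop below $c-1$ and pull coins \emph{out} of the window; after that $\xi(c+1)<c$, the interface is no longer flat, and the threshold for an inward crossing is no longer level $c+2$, so even the corrected ``gross inflow equals zero'' statement is in doubt. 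The paper avoids both issues by proving only the weaker claim that the \emph{net} flow across each boundary edge is outward (its inequalities comparing the number of coins traveling $c+1\to c+2$ with the number traveling $c+2\to c+1$, and symmetrically on the left), which is all your conservation count requires. So the skeleton is right, but the one step you flag as delicate is carried out with an argument that fails on configurations explicitly permitted by $A_3$.
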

\begin{proof}
\begin{figure}[t]
 \centering
 \scalebox{0.50}{\input{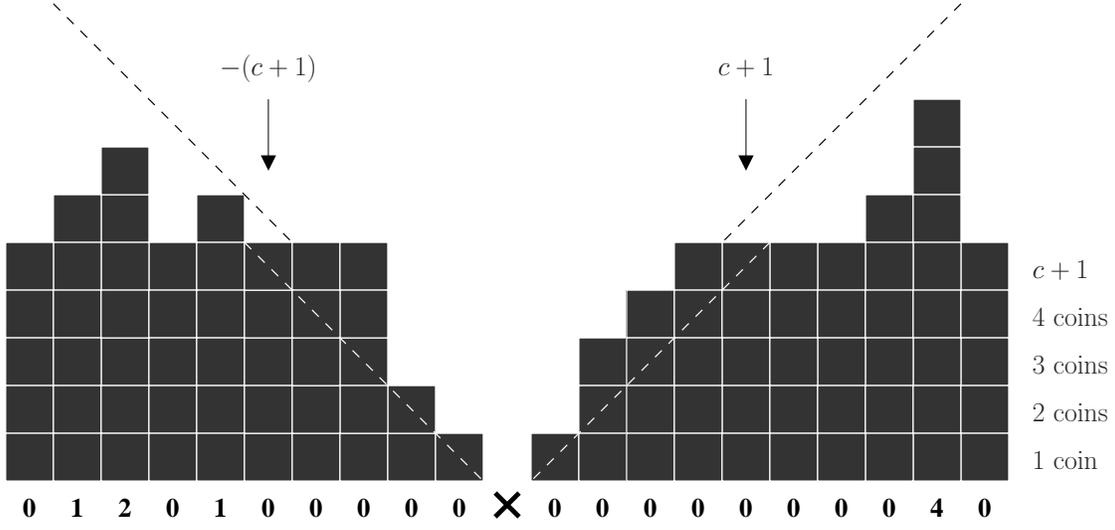}}
 \caption{\upshape Typical configuration at time one when~$A$ occurs: the agent at 0 is dead and the fortune of the agents at distance at least~$c + 2$ from
                   the origin is below the black dashed line.
                   The numbers at the bottom of the picture give the number of coins these agents earned by time one.
                   In the picture, we assume that these agents do not spend any coin, in which case the fortune of the agents within distance~$c + 1$ of the
                   origin is above the white dashed line.}
\label{fig:profile}
\end{figure}
 To begin with, we ignore the exchange of money between~$c + 1$ and its right neighbor and between~$- (c + 1)$ and its left neighbor.
 Recalling that an agent can receive one coin from a neighbor only if this neighbor has at least two more coins, on the event~$A_1 \cap A_2$,
\begin{equation}
\label{eq:remove-1}
  \xi_1 (0) = - 1 \quad \hbox{and} \quad c \geq \xi_t (z) \geq |z| - 1 \quad \hbox{for all} \quad 0 < |z| \leq c + 1 \ \hbox{and} \ t \in (0, 1).
\end{equation}
 Note that the second inequality above becomes an equality when~$\mu = \infty$.
 In this case, the total loss of coins among the~$2c + 3$ vertices around zero is given by
 $$ (c + 1) + 2c + 2 (c - 1) + \cdots + 2 \times 1 + 2 \times 0 = (c + 1)^2, $$
 which explains our definition of the event~$A_1$.
 Observe that~\eqref{eq:remove-1} implies that there are exactly~$c$ coins at vertex~$c + 1$ until time one.
 In particular, looking at the full system and allowing the exchange of money between~$c + 1$ and its right neighbor, on the event~$A_3$,
\begin{equation}
\label{eq:remove-2}
  \begin{array}{l}
    \hbox{number of coins traveling~$c + 1 \to c + 2$ by time one} \vspace*{4pt} \\
    \hspace*{50pt} \geq \ \hbox{number of coins traveling~$c + 2 \to c + 1$ by time one.} \end{array}
\end{equation}
 By symmetry, on the event~$A_4$,
\begin{equation}
\label{eq:remove-3}
  \begin{array}{l}
    \hbox{number of coins traveling~$- (c + 1) \to - (c + 2)$ by time one} \vspace*{4pt} \\
    \hspace*{20pt} \geq \ \hbox{number of coins traveling~$- (c + 2) \to - (c + 1)$ by time one.} \end{array}
\end{equation}
 Combining~\eqref{eq:remove-1}--\eqref{eq:remove-3}, we deduce that given the event~$A$ we must have~$\xi_1 (0) = -1$.
\end{proof} \\ \\
 To prove that the event~$A$ has a positive probability, we let
 $$ \ep = - (1/2)(E (\phi) - 1) > 0 \quad \hbox{so that} \quad E (\phi) = 1 - 2 \ep $$
 and call vertex~$z \in \Z$
 $$ \begin{array}{rcll}
    \hbox{a right $\ep$-sink} & \hbox{when} & \phi_z + \phi_{z + 1} + \cdots + \phi_{z + n} \leq (n + 1)(1 - \ep) & \hbox{for all} \ n \in \N \vspace*{4pt} \\
    \hbox{a left $\ep$-sink} & \hbox{when} & \phi_z + \phi_{z - 1} + \cdots + \phi_{z - n} \leq (n + 1)(1 - \ep) & \hbox{for all} \ n \in \N. \end{array} $$
 Then, we have the following result.
\begin{lemma} --
\label{lem:slln}
 We have~$P (z \ \hbox{is a right $\ep$-sink}) = P (z \ \hbox{is a right $\ep$-sink}) = a > 0$.
\end{lemma}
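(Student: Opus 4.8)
The plan is to recognize the sink condition as the statement that a mean-negative random walk never climbs above zero, and then to establish positivity by a classical fluctuation-theory argument. First I would dispose of the routine reductions. Since the earning rates~$(\phi_z)$ are independent realizations of the same distribution~$\phi$, the law of the sequence~$(\phi_{z + k})_{k \geq 0}$ does not depend on~$z$, so the probability that~$z$ is a right $\ep$-sink is independent of~$z$; moreover, an i.i.d. sequence has the same law as its reversal, which identifies the right-sink probability with the left-sink probability. Thus it suffices to prove that the common value~$a$ is strictly positive.

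Next I would reformulate the right-sink condition in terms of a random walk. Writing~$Y_k = \phi_{z + k} - (1 - \ep)$ and~$W_m = \sum_{k = 0}^{m - 1} Y_k$ with the convention~$W_0 = 0$, the increments~$Y_k$ are i.i.d. with mean~$E (\phi) - (1 - \ep) = (1 - 2 \ep) - (1 - \ep) = - \ep < 0$, and the event that~$z$ is a right $\ep$-sink is exactly~$\{W_m \leq 0 \ \hbox{for all} \ m \geq 1 \}$. Because~$E (\phi) < \infty$, the strong law of large numbers gives~$W_m / m \to - \ep$, hence~$W_m \to - \infty$ almost surely.

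The heart of the matter is then to show that~$P (W_m \leq 0 \ \hbox{for all} \ m \geq 1) > 0$. Let~$\sigma = \inf \,\{m \geq 1 : W_m > 0 \}$ denote the first strict ascending ladder epoch, so that the event above equals~$\{\sigma = \infty \}$. I would argue by contradiction: if~$P (\sigma < \infty) = 1$, then by the strong Markov property the walk regenerates at~$\sigma$, producing a sequence of almost surely finite successive ladder epochs with i.i.d. strictly positive ladder heights; summing these heights forces~$\limsup_m W_m = + \infty$ almost surely, which contradicts~$W_m \to - \infty$. Therefore~$P (\sigma = \infty) > 0$, which is precisely the conclusion~$a > 0$.

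The main obstacle is this final positivity step, since it is the only place where the negative drift~$E (\phi) < 1$ is genuinely used; the preliminary reductions are immediate from the i.i.d. assumption, and the reformulation is a change of variables. The cleanest route to the obstacle is the ladder-epoch renewal argument sketched above, which converts the qualitative fact ``a transient-to-$- \infty$ walk has positive probability of never being positive'' into the quantitative statement~$a > 0$ needed for the subsequent use of the ergodic theorem.
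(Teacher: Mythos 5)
Your proof is correct, but the key positivity step is argued quite differently from the paper. Both arguments share the same reductions (translation invariance, reversal symmetry identifying the left- and right-sink probabilities, and the reformulation of the right-sink event as a mean-$(-\ep)$ random walk staying nonpositive). For the positivity itself, the paper splits the event at a deterministic time~$N$ chosen via the strong law so that~$P (X_n \leq 0 \ \hbox{for all} \ n \geq N) \geq 1/2$, forces the first~$N$ increments to be individually nonpositive (probability~$p^N$ with~$p = P (\phi \leq 1 - \ep) > 0$, which is positive precisely because~$E (\phi) < 1 - \ep$), and glues the two pieces with a Harris/FKG positive-correlation inequality, since all the events~$\{X_n \leq 0 \}$ are decreasing functions of the i.i.d.\ variables~$\phi_{z + i}$. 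You instead run the classical fluctuation-theory argument: if the first strict ascending ladder epoch were almost surely finite, regeneration would produce infinitely many ladder epochs and hence~$W_m > 0$ infinitely often, contradicting~$W_m \to - \infty$. Your route is cleaner in that it avoids the correlation inequality entirely and does not require isolating the fact that~$P (\phi \leq 1 - \ep) > 0$ (your contradiction absorbs that case automatically), but it is purely qualitative; the paper's argument, while clunkier, produces the explicit lower bound~$(1/2) \,p^N$ and sets up the process~$(X_n)$ and the positive-correlation device that are reused verbatim in the proof of Lemma~\ref{lem:ep-sink}. Either proof is acceptable for the purposes of the theorem, since only the strict positivity of~$a$ is used downstream.
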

\begin{proof}
 Define the process
 $$ X_n = X_n (z) = \phi_z + \phi_{z + 1} + \cdots + \phi_{z + n} - (n + 1)(1 - \ep) \quad \hbox{for all} \quad n \in \N. $$
 Since the random variables~$\phi_z, \phi_{z + 1}, \ldots, \phi_{z + n}$ are independent and identically distributed, it follows from the strong law of large numbers that
 $$ \lim_{n \to \infty} \frac{X_n}{n + 1} = \lim_{n \to \infty} \,\frac{1}{n + 1} \,\sum_{i = 0}^n \ (\phi_{z + i} - (1 - \ep)) = E (\phi) - (1 - \ep) = - \ep < 0. $$
 In particular, there exists~$N$, fixed from now on, such that
\begin{equation}
\label{eq:slln-1}
  P (X_n \leq 0 \ \hbox{for all} \ n \geq N) = P \bigg(\sum_{i = 1}^n \ (\phi_{z + i} - (1 - \ep)) \leq 0 \ \hbox{for all} \ n \geq N \bigg) \geq 1/2.
\end{equation}
 In addition, since~$E (\phi) < 1 - \ep$, we have~$p = P (\phi \leq 1 - \ep) > 0$ so
\begin{equation}
\label{eq:slln-2}
  P (X_n \leq 0 \ \hbox{for all} \ n < N) \geq P (\phi_{z + i} \leq 1 - \ep \ \hbox{for all} \ i < N) = p^N > 0.
\end{equation}
 Finally, combining~\eqref{eq:slln-1} and~\eqref{eq:slln-2} and using that the events~$\{X_n \leq 0 \}$ for different values of~$n \in \N$ are positively correlated, we conclude that
 $$ \begin{array}{l}
      P (z \ \hbox{is a right $\ep$-sink}) = P (X_n \leq 0 \ \hbox{for all} \ n \geq 0) \vspace*{4pt} \\ \hspace*{25pt} =
      P (X_n \leq 0 \ \hbox{for all} \ n \geq N \,| \,X_n \leq 0 \ \hbox{for all} \ n < N) \,P (X_n \leq 0 \ \hbox{for all} \ n < N) \vspace*{4pt} \\ \hspace*{25pt} \geq
      P (X_n \leq 0 \ \hbox{for all} \ n \geq N) \,P (X_n \leq 0 \ \hbox{for all} \ n < N) \geq (1/2) \,p^N > 0. \end{array} $$
 It also follows from obvious symmetry that the probability that~$z$ is a left~$\ep$-sink is equal to the probability that it is a right~$\ep$-sink.
 This completes the proof.
\end{proof} \\ \\
 Using the previous lemma, we can now prove that the event~$A$ has positive probability.
\begin{lemma} --
\label{lem:collapse}
 We have~$P (A) > 0$.
\end{lemma}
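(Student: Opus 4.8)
The plan is to exploit the fact that $A_1, A_2, A_3, A_4$ are each measurable with respect to disjoint families of the underlying Poisson processes, reduce $P(A)$ to a product, and bound each factor from below. First I would observe that $A_1$ depends only on the processes $N_\cdot^{\pm}(0)$ at the origin, $A_2$ only on $N_\cdot^{\pm}(z)$ for $0 < |z| \leq c + 1$, $A_3$ only on $N_\cdot^{+}(z)$ for $z \geq c + 2$, and $A_4$ only on $N_\cdot^{+}(z)$ for $z \leq -(c + 2)$. These four vertex sets are pairwise disjoint, and all the Poisson processes are independent, so the four events are mutually independent. Hence $P(A) = P(A_1) \, P(A_2) \, P(A_3) \, P(A_4)$, and it suffices to prove each factor is strictly positive.

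The factors $P(A_1)$ and $P(A_2)$ are elementary. Conditionally on $\phi_0$, the count $N_1^+(0)$ is Poisson with mean $\phi_0$, so $P(N_1^+(0) = 0) = E(e^{-\phi}) > 0$, while $N_1^-(0)$ is Poisson with mean one, so $P(N_1^-(0) \geq (c + 1)^2) > 0$; by independence $P(A_1) > 0$. Likewise, $A_2$ is a finite intersection over the $2(c + 1)$ vertices with $0 < |z| \leq c + 1$ of the events $\{N_1^+(z) = N_1^-(z) = 0\}$, each of probability $e^{-1} \, E(e^{-\phi}) > 0$, so $P(A_2) > 0$.

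The only real work is $P(A_3) > 0$, and $P(A_4) > 0$ then follows from the symmetry $z \mapsto -z$. I would set $Y_i = N_1^+(c + 1 + i)$ for $i \geq 1$, which are i.i.d.\ nonnegative integer random variables with $E(Y_i) = E(\phi) < 1$, and write $A_3 = \{S_n \leq 0 \ \text{for all} \ n \geq 1\}$ where $S_n = \sum_{i=1}^n (Y_i - 1)$. From here the argument is a verbatim repetition of the proof of Lemma~\ref{lem:slln}: since $E(Y_i - 1) = E(\phi) - 1 < 0$, the strong law of large numbers gives $S_n \to -\infty$ almost surely, so one may fix $N$ with $P(S_n \leq 0 \ \text{for all} \ n \geq N) \geq 1/2$. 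Forcing $Y_1 = \cdots = Y_{N-1} = 0$, an event of probability $(E(e^{-\phi}))^{N-1} > 0$, guarantees $S_n = -n \leq 0$ for $1 \leq n < N$; and because each $\{S_n \leq 0\}$ is a decreasing event in the variables $(Y_i)$, the events $\{S_n \leq 0 \ \text{for all} \ n \geq N\}$ and $\{S_n \leq 0 \ \text{for all} \ 1 \leq n < N\}$ are positively correlated by the Harris inequality. Conditioning exactly as in Lemma~\ref{lem:slln} yields $P(A_3) \geq (1/2)(E(e^{-\phi}))^{N-1} > 0$.

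The main obstacle is precisely the ``for all $n$'' quantifier in $A_3$ (and $A_4$): a single tail estimate does not control every partial sum simultaneously, so one must split the range at a fixed $N$, handle the infinite tail $n \geq N$ through the negative drift, handle the finitely many small indices $n < N$ by an explicit positive-probability configuration, and glue the two pieces together via positive correlation. Once all four factors are shown positive, their product gives $P(A) > 0$, completing the proof.
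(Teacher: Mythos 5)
Your proof is correct, and its skeleton---factoring $P(A) = P(A_1)\,P(A_2)\,P(A_3)\,P(A_4)$ by independence of the Poisson processes attached to disjoint sets of vertices, disposing of $P(A_1), P(A_2)$ by elementary computations, and reducing everything to $P(A_3) > 0$---is the same as the paper's. Where you genuinely diverge is in the core step $P(A_3) > 0$. The paper takes a quenched route: it conditions on the event that vertex~$c + 2$ is a right $\ep$-sink (probability~$a > 0$ by Lemma~\ref{lem:slln}), so that the $n$-th partial sum of the rates is at most~$n (1 - \ep)$, and then bounds $P (A_3 \,|\, c + 2 \ \hbox{is a right $\ep$-sink})$ from below by the infinite product $\prod_{n > 0} P (Y_n \leq n)$ with $Y_n = \poisson (n (1 - \ep))$, whose positivity rests on large deviation estimates for the Poisson distribution. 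You instead integrate out the environment: the counts $Y_i = N_1^+ (c + 1 + i)$ are i.i.d.\ mixed-Poisson variables with mean $E (\phi) < 1$, so $A_3$ is exactly the event that the mean-zero-crossing random walk $S_n = \sum_{i \leq n} (Y_i - 1)$ stays nonpositive, and the strong-law-plus-positive-correlation argument of Lemma~\ref{lem:slln} transfers verbatim to the level of the counts. Your annealed version is more self-contained---it needs neither the sink definition, nor Poisson large deviations, nor the convergence of an infinite product---and it produces the explicit lower bound $P (A_3) \geq (1/2) (E (e^{- \phi}))^{N - 1}$; the paper's version is a bit heavier but ties $P (A_3)$ directly to the sink probability~$a$, which is the quantity that organizes the remainder of the proof of Theorem~\ref{th:1D-sink}. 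Both arguments are valid, and each correctly isolates the real difficulty, namely the ``for all $n$'' quantifier, which is handled in both cases by splitting at a fixed index and gluing with the Harris inequality.
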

\begin{proof}
 Since the Poisson processes in the graphical representation are independent
 $$ P (A) = P (A_1) \,P (A_2) \,P (A_3) \,P (A_4). $$
 In addition, for any given~$c$ finite, the first two events have positive probability while, by symmetry, the last two events have the same probability, i.e.,
\begin{equation}
\label{eq:collapse-0}
  P (A_1) \,P (A_2) > 0 \quad \hbox{and} \quad P (A_3) = P (A_4).
\end{equation}
 In particular, to conclude, it suffices to prove that the event~$A_3$ has a positive probability.
 By conditioning on the event that vertex~$c + 2$ is a right $\ep$-sink, we get
\begin{equation}
\label{eq:collapse-1}
  \begin{array}{rcl}
     P (A_3) & \n \geq \n & P (A_3 \,| \,c + 2 \ \hbox{is a right $\ep$-sink}) \,P (c + 2 \ \hbox{is a right~$\ep$-sink}) \vspace*{4pt} \\
             & \n   =  \n & a \,P (A_3 \,| \,c + 2 \ \hbox{is a right $\ep$-sink}) \end{array} 
\end{equation}
 where~$a > 0$ according to Lemma~\ref{lem:slln}. Now, let
 $$ Y_n = \poisson (n \,(1 - \ep)) \ \hbox{be independent for all} \ n > 0. $$
 Using that the events that define the event~$A_3$ are positively correlated and recalling the definition of right~$\ep$-sink, we deduce that
\begin{equation}
\label{eq:collapse-2}
 \begin{array}{l} P (A_3 \,| \,c + 2 \ \hbox{is a right $\ep$-sink}) \geq P (Y_n \leq n \ \hbox{for all} \ n > 0) = \prod_{n > 0} \,P (Y_n \leq n). \end{array}
\end{equation}
 In other respects,
\begin{equation}
\label{eq:collapse-3}
  \begin{array}{rcl}
    \prod_{n > 0} \,P (Y_n \leq n) > 0 & \hbox{if and only if} & \sum_{n > 0} \,- \log (1 - P (Y_n > n)) < \infty \vspace*{4pt} \\
                                       & \hbox{if and only if} & \sum_{n > 0} \,P (Y_n > n) < \infty \end{array}
\end{equation}
 which follows from standard large deviations estimates for the Poisson distribution.
 Combining~\eqref{eq:collapse-1}--\eqref{eq:collapse-3}, we deduce that~$P (A_3) > 0$ which, together with~\eqref{eq:collapse-0}, gives the lemma.
\end{proof} \\ \\
 Since the random variables~$\phi_z$ are independent and identically distributed, we may apply the ergodic theorem together
 with Lemmas~\ref{lem:remove} and~\ref{lem:collapse} to deduce that
\begin{equation}
\label{eq:islands}
  \lim_{n \to \infty} \ \frac{1}{2n + 1} \ \sum_{z = -n}^n \,\ind \{\xi_1 (z) = -1 \} \geq P (A) > 0.
\end{equation}
 Note however that this does not imply our theorem since the probability of~$A_1 \cap A_2$, and therefore the lower bound~$P (A)$, depends on~$c$, the initial number
 of coins per vertex. \\
\indent The second step of the proof is to identify an infinite collection of vertices, that we call $\ep$-sinks, that are removed eventually.
 The density of such vertices is bounded from below by a positive constant that does not depend on~$c$.
 More precisely, we call vertex~$z \in \Z$ an $\ep$-sink if
\begin{equation}
\label{eq:ep-sink}
  \phi_{z - m} + \phi_{z - m + 1} + \cdots + \phi_{z + n} \leq (m + n + 1)(1 - \ep) \quad \hbox{for all} \quad m, n \in \N.
\end{equation}
\begin{lemma} --
\label{lem:ep-sink}
 We have~$P (z \ \hbox{is an~$\ep$-sink}) \geq a^2 > 0$.
\end{lemma}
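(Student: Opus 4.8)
The plan is to realize the two-sided $\ep$-sink condition at $z$ as the intersection of a \emph{right} $\ep$-sink event and a \emph{left} $\ep$-sink event placed at adjacent vertices, so that the i.i.d.\ structure of the $\phi$'s makes these two events independent and Lemma~\ref{lem:slln} applies to each factor. Concretely, I would split the interval $\{z-m, \ldots, z+n\}$ appearing in~\eqref{eq:ep-sink} at the vertex $z$ into the two blocks $\{z, \ldots, z+n\}$ and $\{z-m, \ldots, z-1\}$, which contain $n+1$ and $m$ vertices respectively.

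First I would establish the key containment
$$ \{z \ \text{is a right } \ep\text{-sink}\} \cap \{z-1 \ \text{is a left } \ep\text{-sink}\} \subseteq \{z \ \text{is an } \ep\text{-sink}\}. $$
Indeed, on the event on the left, the definition of right $\ep$-sink gives $\phi_z + \cdots + \phi_{z+n} \leq (n+1)(1-\ep)$, while the definition of left $\ep$-sink at $z-1$, applied with the index $m-1$, gives $\phi_{z-1} + \cdots + \phi_{z-m} \leq m(1-\ep)$ for every $m \geq 1$ (the bound being trivial for $m = 0$). Adding these two inequalities yields exactly
$$ \phi_{z-m} + \cdots + \phi_{z+n} \leq (m+n+1)(1-\ep) \quad \text{for all} \quad m, n \in \N, $$
which is~\eqref{eq:ep-sink}. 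The only subtlety here --- and the step I expect to require the most care --- is the bookkeeping of the number of terms: one must place the left $\ep$-sink at $z-1$ rather than at $z$, so that the two blocks have exactly $n+1$ and $m$ vertices and the right-hand sides add up to $(m+n+1)(1-\ep)$ with no leftover $(1-\ep)$.

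Finally I would exploit independence. The event that $z$ is a right $\ep$-sink depends only on $(\phi_z, \phi_{z+1}, \phi_{z+2}, \ldots)$, whereas the event that $z-1$ is a left $\ep$-sink depends only on $(\phi_{z-1}, \phi_{z-2}, \ldots)$. These two families of random variables are disjoint, so by independence of the $\phi$'s the two events are independent, and Lemma~\ref{lem:slln} gives each of them probability $a$. The containment above then yields
$$ P (z \ \text{is an } \ep\text{-sink}) \geq P (z \ \text{is a right } \ep\text{-sink}) \, P (z-1 \ \text{is a left } \ep\text{-sink}) = a^2 > 0, $$
which is the desired bound. An entirely symmetric decomposition, splitting instead between $z$ and $z+1$ and combining a left $\ep$-sink at $z$ with a right $\ep$-sink at $z+1$, would work equally well.
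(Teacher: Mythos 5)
Your proof is correct, and it differs from the paper's in a way worth spelling out. The paper decomposes the event that $z$ is an $\ep$-sink into a right $\ep$-sink event and a left $\ep$-sink event \emph{both anchored at the same vertex $z$}, and then lower-bounds the probability of their intersection by $a\cdot a$ by invoking positive correlation of the two (decreasing) events, which share the variable $\phi_z$. You instead anchor the left sink at $z-1$, so the two events depend on the disjoint families $(\phi_z, \phi_{z+1}, \ldots)$ and $(\phi_{z-1}, \phi_{z-2}, \ldots)$ and are genuinely independent; the product bound is then immediate and no correlation inequality is needed. More substantively, your bookkeeping repairs a real defect in the paper's set containment: writing $A_{m,n}$ for the event in \eqref{eq:ep-sink}, the paper claims $A_{m,0} \cap A_{0,n} \subset A_{m,n}$, but adding the inequalities for the blocks $\{z-m, \ldots, z\}$ and $\{z, \ldots, z+n\}$ counts $\phi_z$ twice and only yields $\phi_{z-m} + \cdots + \phi_{z+n} \leq (m+n+2)(1-\ep) - \phi_z$, which is not $\leq (m+n+1)(1-\ep)$ unless $\phi_z \geq 1-\ep$, while $A_{0,0}$ forces $\phi_z \leq 1-\ep$; one can in fact exhibit configurations satisfying every $A_{m,0}$ and $A_{0,n}$ yet violating $A_{1,1}$. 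Your split into blocks of $n+1$ and $m$ vertices makes the right-hand sides add up to exactly $(m+n+1)(1-\ep)$, so the containment holds verbatim. The only point to make explicit is that $P(z-1 \ \hbox{is a left $\ep$-sink}) = a$, which follows from translation invariance of the i.i.d.\ sequence $(\phi_w)$ together with Lemma~\ref{lem:slln}.
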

\begin{proof}
 Let~$A_{m, n}$ be the event in~\eqref{eq:ep-sink} and observe that
 $$ A_{m, 0} \cap A_{0, n} \subset A_{m, n} \quad \hbox{for all} \quad m, n \in \N. $$
 In particular, the event that~$z$ is an~$\ep$-sink is
\begin{equation}
\label{eq:ep-sink-1}
  \bigcap_{m, n} A_{m, n} = \bigcap_{m, n} (A_{m, 0} \cap A_{0, n}) = \bigg(\bigcap_m A_{m, 0} \bigg) \cap \bigg(\bigcap_n A_{0, n} \bigg).
\end{equation}
 Using that~$A_{0, n} = \{X_n \leq 0 \}$ where the process~$(X_n)$ has been defined in the proof of Lemma~\ref{lem:slln} and obvious symmetry, we also have
\begin{equation}
\label{eq:ep-sink-2}
  P \bigg(\bigcap_m A_{m, 0} \bigg) = P \bigg(\bigcap_n A_{0, n} \bigg) = P (X_n \leq 0 \ \hbox{for all} \ n \geq 0) = a > 0
\end{equation}
 according to Lemma~\ref{lem:slln}.
 Combining~\eqref{eq:ep-sink-1} and~\eqref{eq:ep-sink-2}, and using that the events~$A_{m, 0}$ and~$A_{0, n}$ are positively correlated, we conclude that
 $$ P (z \ \hbox{is an~$\ep$-sink}) = P \bigg(\bigcap_{m, n} A_{m, n} \bigg) \geq P \bigg(\bigcap_m A_{m, 0} \bigg) \,P \bigg(\bigcap_n A_{0, n} \bigg) = a^2 > 0. $$
 This completes the proof.
\end{proof} \\ \\
\indent To complete the proof of the theorem, the last step is to show that all the~$\ep$-sinks die eventually with probability one, which is done in the following lemma.
\begin{lemma} --
\label{lem:death-sink}
 Assume that~$x \in \Z$ is an~$\ep$-sink.
 Then~$\xi_t (x) = - 1$ for some~$t$.
\end{lemma}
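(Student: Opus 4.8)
The plan is to show that an $\ep$-sink is trapped, by time one, inside a finite interval bounded on both sides by dead vertices, and then to prove by induction on the length of this interval that the sink itself must eventually die. If $x$ is already dead at time one there is nothing to prove, so assume it is alive. The same ergodic argument that yields~\eqref{eq:islands}, applied to one-sided averages, shows that the density of dead vertices at time one is positive on each side of $x$; hence, almost surely, there is a dead vertex to the left and a dead vertex to the right of $x$ at time one. Consequently $x$ belongs at time one to a finite interval $I = [x - m, x + n] \subset \Z$ whose neighbors $x - m - 1$ and $x + n + 1$ are dead, so that $I$ is disconnected from the rest of the graph.

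The key point is that, on such an isolated interval, the $\ep$-sink condition~\eqref{eq:ep-sink} forces a death in finite time. As long as every vertex of $I$ is alive, the total fortune $Z_t^I = \sum_{z \in I} \xi_t (z)$ increases at rate $\sum_{z \in I} \phi_z$ and decreases at rate $\card (I)$, since exchanges inside $I$ preserve the total while exchanges across the two dead neighbors are forbidden. Because $x$ is an $\ep$-sink,
$$ \sum_{z \in I} \phi_z \leq \card (I) \,(1 - \ep) < \card (I), $$
so $(Z_t^I)$ behaves like the chain of Lemma~\ref{lem:monotone-cv} in the regime $\bphi < 1$ and therefore tends to $- \infty$. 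Since $Z_t^I \geq 0$ as long as all vertices of $I$ are alive, some vertex of $I$ must die at an almost surely finite time.

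The death of a vertex of $I$ either is the death of $x$, in which case we are done, or it splits $I$ and leaves $x$ in a strictly shorter isolated interval, still of the form $[x - m', x + n']$ and hence still covered by~\eqref{eq:ep-sink}. This gives an induction on $\card (I)$: in the base case $\card (I) = 1$ the vertex $x$ is isolated with $\phi_x \leq 1 - \ep < 1$, so it dies almost surely by Lemma~\ref{lem:monotone-cv} applied to the one-person system; in the inductive step one applies the previous paragraph and then the strong Markov property at the first-death time to the strictly smaller interval containing $x$. Since $\card (I)$ is finite and strictly decreases at each interior death, $x$ dies after finitely many steps.

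The main obstacle is the bookkeeping of the recursive step rather than any single estimate: one must check that after each interior death $x$ remains in an isolated interval of the same type, invoke the strong Markov property cleanly at the almost surely finite first-death stopping time, and verify that the negative-drift inequality persists for every sub-interval around $x$. This last point is precisely why the $\ep$-sink condition~\eqref{eq:ep-sink} is required for all $m, n \in \N$ and not merely for one window.
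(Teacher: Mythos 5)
Your proposal is correct and follows essentially the same route as the paper: trap the sink at time one in a finite interval flanked by dead vertices (via the ergodic argument behind~\eqref{eq:islands}), observe that the $\ep$-sink condition gives the total fortune of any such interval a negative drift forcing a death in finite time, and recurse on the strictly shrinking interval until $x$ itself dies. The paper organizes the recursion through an explicit sequence of stopping times $T_i$ rather than an induction on $\card(I)$, but the content is identical.
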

\begin{proof}
\begin{figure}[t]
 \centering
 \scalebox{0.50}{\input{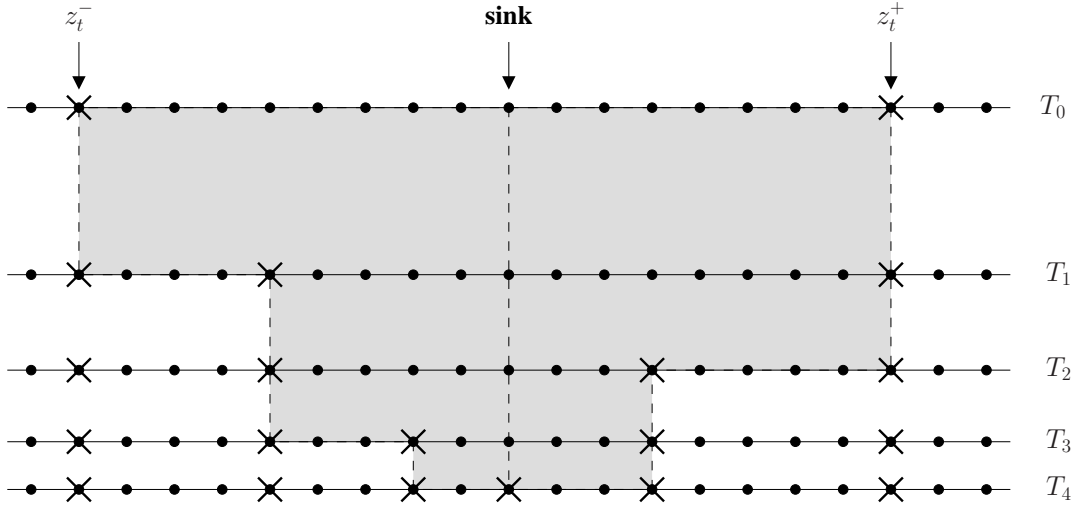}}
 \caption{\upshape Picture of the construction in Lemma~\ref{lem:death-sink} with the sequence of stopping times~$T_i$.
                   The crosses~$\times$ represent the agents that are dead.
                   The gray region shows the interval~$I_t$ from time~$T_0 = 1$ until the sink dies.
                   In our example, it takes four steps to kill the sink located at the center of the picture.}
\label{fig:sink}
\end{figure}
 For all times~$t$, we define
 $$ z_t^- = \sup \,\{z \leq x : \xi_t (z) = - 1 \} \quad \hbox{and} \quad z_t^+ = \inf \,\{z \geq x : \xi_t (z) = - 1 \}. $$
 In view of~\eqref{eq:islands} and since~$-1$ is an absorbing state for each vertex,
 $$ I_t = (z_t^-, z_t^+) \ \ \hbox{is bounded at time~$t = 1$ and nonincreasing in~$t$} $$
 for the inclusion.
 Now, set~$T_0 = 1$ and define recursively
 $$ \begin{array}{rclcl}
      T_i & \n = \n & \inf \,\{t > T_{i - 1} : I_t \neq I_{t-} \} & \hbox{when} & T_{i - 1} < \infty \vspace*{4pt} \\
          & \n = \n & \infty                                      & \hbox{when} & T_{i - 1} = \infty \end{array} $$
 for all~$i > 0$.
 See Figure~\ref{fig:sink} for a picture.
 Given that time~$T_i$ is finite and that the interval~$I_{T_i}$ is nonempty, by the definition of~$\ep$-sink, between time~$T_i$ and time~$T_{i + 1}$, the process
 $$ Z_t = \xi_t (z_{T_i}^- + 1) + \xi_t (z_{T_i}^- + 2) + \cdots + \xi_t (z_{T_i}^+ - 1) $$
 is dominated stochastically by a one-dimensional random walk with a negative drift.
 This implies that the expected number of coins in the interval~$I_t$ is decreasing, therefore one of the vertices in the interval must reach state~$-1$
 in a finite time and
 $$ P (T_{i + 1} < \infty \,| \,T_i < \infty \ \hbox{and} \ I_{T_i} \neq \varnothing) = 1. $$
 Recall also that the interval is bounded at time one and observe that, by definition of the stopping times, the length of the interval decreases
 by at least one at each step, i.e.,
 $$ |I_{T_0}| < \infty \quad \hbox{and} \quad |I_{T_{i + 1}}| \leq |I_{T_i}| - 1 \quad \hbox{when} \quad T_i < T_{i + 1} < \infty. $$
 In summary, it takes only a finite number steps for~$I_t$ to become empty and the duration of each step is almost surely finite.
 Since in addition the sink dies at the time~$I_t$ becomes empty,
 $$ \inf \,\{t : \xi_t (x) = -1 \} = \inf \,\{t : I_t = \varnothing \} < \infty $$
 with probability one.
 This completes the proof.
\end{proof} \\ \\
 As previously, since the random variables~$\phi_z$ are independent and identically distributed, we may apply the ergodic theorem which, together with
 Lemmas~\ref{lem:ep-sink} and~\ref{lem:death-sink}, implies that
 $$ \begin{array}{l}
    \displaystyle \lim_{n \to \infty} \ \frac{1}{2n + 1} \ \sum_{z = -n}^n \,\ind \{\xi_t (z) = -1 \ \hbox{for some} \ t \} \vspace*{0pt} \\ \hspace*{50pt} \geq
    \displaystyle \lim_{n \to \infty} \ \frac{1}{2n + 1} \ \sum_{z = -n}^n \,\ind \{z \ \hbox{is an~$\ep$-sink} \} \geq a^2 > 0. \end{array} $$
 Since~$a$ does not depend on~$c$, this proves Theorem~\ref{th:1D-sink}.

\end{document}